\numberwithin{equation}{section}
\title{On a binary system of Prendiville: The cubic case}
\author{Shaoming Guo}
\date{}
\def\R{\mathbb{R}}
\def\N{\mathbb{N}}
\def\C{\mathbb{C}}\def\nint{\mathop{\diagup\kern-13.0pt\int}}
\def\lesim{\lesssim}
\def\beq{\begin{equation}}
\def\endeq{\end{equation}}
\def\bg{\begin{gathered}}
\def\eg{\end{gathered}}
\def\83{\frac{8}{3}}
\def\38{\frac{3}{8}}
\def\mc{\mathcal}
\theoremstyle{plain}
\newtheorem{thm}{Theorem}[section]
\newtheorem{prop}[thm]{Proposition}
\newtheorem{lem}[thm]{Lemma}
\newtheorem*{conj*}{Conjecture}
\newtheorem*{openproblem*}{Open Problem}
\begin{document}

\maketitle

\begin{abstract}
We prove sharp decoupling inequalities for a class of two dimensional non-degenerate surfaces in $\R^5$, introduced by Prendiville \cite{Pre}. As a consequence, we obtain sharp bounds on the number of integer solutions of the Diophantine systems associated with these surfaces. 
\end{abstract}

\smallskip

\let\thefootnote\relax\footnote{
AMS subject classification: Primary 11L07; Secondary 42A45.}

\section{Introduction}

Let $\Phi(t, s)$ be a homogeneous polynomial of degree three. Consider the two dimensional surface
\beq\label{surface}
\mc{S}=\{(t, s, \Phi_{t}(t, s), \Phi_{s}(t, s), \Phi(t, s)): (t, s)\in [0, 1]^2\}.
\endeq 
We say that $\Phi$ is non-degenerate if it can not be written as $(\mu t+\nu s)^3$ for any $\mu, \nu\in \R$. This is the same as saying that, if we write $\Phi(t, s)=at^3+ bt^2 s+c ts^2+d s^3$, then the matrix
\beq\label{non-dege-matrix}
\begin{pmatrix}
3a & 2b & c\\
b & 2c & 3d
\end{pmatrix}
\endeq
has rank two. 
This will be our assumption throughout the present paper. \\

Consider the following system of Diophantine equations
\beq\label{diophantine}
\begin{cases}
\hfill x_{1}+x_{2}+...+x_{r} \hfill & =x_{r+1}+x_{r+2}+...+x_{2r},\\
\hfill y_{1}+y_{2}+...+y_{r}\hfill & =y_{r+1}+y_{r+2}+...+y_{2r},\\
\hfill \Phi_t(x_1, y_1)+...+\Phi_t(x_r, y_r)\hfill & =\Phi_t(x_{r+1}, y_{r+1})+...+\Phi_t(x_{2r}, y_{2r}),\\
\hfill \Phi_s(x_1, y_1)+...+\Phi_s(x_r, y_r)\hfill & =\Phi_s(x_{r+1}, y_{r+1})+...+\Phi_s(x_{2r}, y_{2r}),\\
\hfill \Phi(x_1, y_1)+...+\Phi(x_r, y_r)\hfill & =\Phi(x_{r+1}, y_{r+1})+...+\Phi(x_{2r}, y_{2r}).
\end{cases}
\endeq
Here $r$ is a positive integer and $x_i, y_i\in \N$ for each $1\le i\le 2r$. For a large integer $N$, we let $J_r(N)$ denote the number of integer solutions $(x_1, ..., x_{2r}, y_1, ..., y_{2r})$ of the system \eqref{diophantine} with $0\le x_i, y_i\le N$ for each $1\le i\le 2r$. We prove 
\begin{thm}\label{main-1}
For each $r\ge 1$ and each $\epsilon>0$, we have 
\beq\label{main-1-dio}
J_r(N) \lesim_{r, \epsilon} N^{2r+\epsilon}+N^{4r-9+\epsilon}.
\endeq
Here the implicit constant depends only on $r$ and $\epsilon$. Moreover, up to the arbitrarily small factor $\epsilon$, the exponents of $N$ are sharp. 
\end{thm}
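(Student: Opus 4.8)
The engine is a sharp $\ell^2$-decoupling inequality for $\mc{S}$. Fix $\delta=1/N$, cover $[0,1]^2$ by a finitely overlapping family of $\delta$-caps $\{\theta\}$, and for each $\theta$ let $\mc{N}_\theta$ be an $O(1)$-dilate of the smallest box containing the piece of $\mc{S}$ lying over $\theta$; in a frame adapted to $\theta$ (two directions tangent to $\mc{S}$, two aligned with the $\Phi_t,\Phi_s$ axes, one with the $\Phi$ axis) these boxes have dimensions $\sim\delta\times\delta\times\delta^2\times\delta^2\times\delta^3$, whose exponents sum to $1+1+2+2+3=9$. The inequality I would establish first is: for every $f$ with $\widehat f$ supported in $\bigcup_\theta\mc{N}_\theta$ and every $2\le p\le 9$,
\beq\label{dec-prop}
\|f\|_{L^p(\R^5)}\lesim_\epsilon \delta^{-\epsilon}\Big(\sum_\theta\|f_\theta\|_{L^p(\R^5)}^2\Big)^{1/2},
\endeq
with $f_\theta$ the Fourier restriction of $f$ to $\mc{N}_\theta$, and $p=9$ sharp. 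Granting \eqref{dec-prop}, Theorem~\ref{main-1} follows from the standard transference (as in Bourgain--Demeter--Guth): setting
\[
g_N(\xi)=\sum_{0\le a,b\le N}e\big(a\xi_1+b\xi_2+\Phi_t(a,b)\xi_3+\Phi_s(a,b)\xi_4+\Phi(a,b)\xi_5\big),
\]
one has $J_r(N)=\int_{[0,1]^5}|g_N(\xi)|^{2r}\,d\xi$; rescaling $(a,b)=(Nt,Ns)$ and using homogeneity of $\Phi$ of degree $3$ puts $g_N$ in the setting of \eqref{dec-prop} at scale $\delta=1/N$, each cap $\theta$ carrying exactly one lattice point, so each $(g_N)_\theta$ is a single modulated exponential. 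For $r\le 4$ (so $2r\le 9$) apply \eqref{dec-prop} with $p=2r$; since $\sum_\theta\|(g_N)_\theta\|_{L^{2r}(w_B)}^2$ is comparable to $N^2$ times the common cap-size, unwinding gives $J_r(N)\lesim_\epsilon N^{2r+\epsilon}$. For $r\ge 5$ apply \eqref{dec-prop} with $p=9$ (giving $\int_{[0,1]^5}|g_N|^{9}\lesim_\epsilon N^{9+\epsilon}$) and bound $|g_N|^{2r}\le(\sup|g_N|)^{2r-9}|g_N|^{9}\le N^{2(2r-9)}|g_N|^{9}$, which yields $J_r(N)\lesim_\epsilon N^{4r-9+\epsilon}$. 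Absorbing the other (subdominant) term proves \eqref{main-1-dio}.

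\textbf{Proof of the decoupling \eqref{dec-prop}.} This is where essentially all of the work lies, and the main obstacle. I would run a Bourgain--Guth broad--narrow induction on scales. After reducing $\Phi$ to one of the finitely many real-linear normal forms of a non-degenerate binary cubic (representatives $t^2s$, $t^3+ts^2$, $t^3-ts^2$), so that all constants are uniform, one iterates the following. \emph{Broad case}: when the mass of $f$ spreads over $\gtrsim 3$ quantitatively transverse caps, dominate $\|f\|_p$ by a multilinear expression and invoke the multilinear Kakeya / Brascamp--Lieb inequality of Bennett--Carbery--Tao; the relevant Brascamp--Lieb datum is built from the tangent $2$-planes of $\mc{S}$, and its finiteness is guaranteed precisely by the rank-two condition on \eqref{non-dege-matrix} together with Euler's identities $t\Phi_{tt}+s\Phi_{ts}=2\Phi_t$ and $t\Phi_{ts}+s\Phi_{ss}=2\Phi_s$, which tie the Hessian of $\Phi$ to $\nabla\Phi$ and give $\mc{S}$ its genuinely two-dimensional curvature. \emph{Narrow case}: when the caps cluster in an $O(\delta')$-neighborhood of a line (or low-degree curve) in the $(t,s)$-domain, the restriction of $\mc{S}$ collapses onto a curve in $\R^5$; one checks, again via the rank condition, that this curve is non-degenerate of the relevant degree and applies the Vinogradov (moment-curve) decoupling of Bourgain--Demeter--Guth, losing only an $\epsilon$-power. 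A parabolic-type rescaling adapted to the homogeneity $(t,s,\Phi_t,\Phi_s,\Phi)\mapsto(\lam t,\lam s,\lam^2\Phi_t,\lam^2\Phi_s,\lam^3\Phi)$ feeds the narrow contributions back into the induction at a smaller scale, and a bootstrapping on the exponent then pushes \eqref{dec-prop} up to the critical $p=9$. The delicate point is organizing this scheme so that the lower-dimensional (narrow) terms are absorbed with exactly the right power of the smaller-scale decoupling constant, and verifying that no degenerate sub-configuration survives; this bookkeeping is where the specific form of $\mc{S}$, rather than that of a generic $2$-surface in $\R^5$, is essential.

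\textbf{Sharpness.} The bound $J_r(N)\gtrsim N^{2r}$ is the diagonal contribution: for any $(x_1,y_1),\dots,(x_r,y_r)\in(\{0,\dots,N\})^2$, taking $(x_{r+j},y_{r+j})=(x_j,y_j)$ solves \eqref{diophantine}, giving $\gtrsim N^{2r}$ solutions. The bound $J_r(N)\gtrsim N^{4r-9}$ is a major-arc estimate: for a small absolute constant $c$ one has $|g_N(\xi)|\gtrsim N^2$ on $\{|\xi_1|,|\xi_2|\le cN^{-1},\ |\xi_3|,|\xi_4|\le cN^{-2},\ |\xi_5|\le cN^{-3}\}$, a set of measure $\sim N^{-9}$, whence $J_r(N)=\int|g_N|^{2r}\gtrsim(N^2)^{2r}N^{-9}=N^{4r-9}$. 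Thus both exponents in \eqref{main-1-dio} are attained up to $N^\epsilon$, which also shows $p=9$ is sharp in \eqref{dec-prop}.
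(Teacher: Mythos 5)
There is a genuine gap, and it sits at the heart of your plan: the $\ell^2$ decoupling inequality you propose to prove (lossless $\ell^2 L^p$ decoupling into $\delta$-caps for all $2\le p\le 9$) is false for part of the non-degenerate class, so it cannot serve as the engine. The rank-two condition on \eqref{non-dege-matrix} does not exclude cubics with a repeated linear factor; your own normal-form list contains $\Phi=t^2s$. For $\Phi=t^2s$ the surface is $(t,s,2ts,t^2,t^2s)$, and over the strip $S=[0,\delta]\times[0,1]$ it collapses into a box of dimensions $\sim \delta\times 1\times\delta\times\delta^2\times\delta^2$ (the slice $t=0$ is a straight line, not a non-degenerate curve, contradicting also your narrow-case claim). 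Testing $g=\mathbf{1}_S$ gives $\|E_Sg\|_{L^p(w_B)}\gtrsim \delta^{1-6/p}$, while the $\ell^2$ right-hand side over the $\delta^{-1}$ caps in $S$ is $\sim\delta^{3/2-9/p}$, so the $\ell^2$ decoupling constant is at least $\delta^{-(\frac12-\frac3p)}$: a power loss for $p>6$, equal to $\delta^{-1/6}$ at $p=9$ (and already $\delta^{-1/8}$ at $p=8$, so even your subcritical cases $r\le 4$ break). This is exactly the phenomenon the paper flags: for these two-dimensional translation-dilation invariant surfaces, lower-dimensional contributions make sharp $\ell^2L^p$ decoupling unavailable/insufficient, which is why the paper proves instead the $\ell^9L^9$ inequality $B_{9,9}(\delta)\lesssim \delta^{-2(\frac12-\frac19)-\epsilon}$ (Theorem \ref{main-theorem-2}), whose constant is dictated by the constructive-interference example, and deduces Theorem \ref{main-1} from it by the standard reduction. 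The $\ell^q$ exponent $q=p=9$ is not a cosmetic choice: it is what absorbs the degenerate strips, and it forces the very different iteration of Section \ref{section-iteration} ($\ell^p$-type quantities $D_p$, two ball-inflation steps with Brascamp--Lieb data of dimensions $d=2$ and $d=4$, and an intermediate $\ell^6L^6$ decoupling for two-dimensional surfaces in $\R^4$), rather than a single Bourgain--Guth broad--narrow scheme.

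A second, lesser issue is that even where your outline is not contradicted, the decoupling proof is only sketched: the "bookkeeping" you defer (how narrow/lower-dimensional terms are absorbed at the critical exponent, and the verification of the Brascamp--Lieb transversality for the whole class, which in the paper requires Proposition \ref{linear-algebra} and the algebraic Lemma \ref{general-taylor}) is precisely where the work lies. On the positive side, your transference of a decoupling inequality to the count $J_r(N)$, the sup-bound trick for $r\ge 5$, and both lower-bound constructions (diagonal solutions giving $N^{2r}$, and the major-arc box of measure $\sim N^{-9}$ on which $|g_N|\gtrsim N^2$ giving $N^{4r-9}$) are correct and consistent with the paper; with the paper's $\ell^9L^9$ estimate in hand, that deduction of \eqref{main-1-dio} is indeed routine.
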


The lower bounds have been calculated by Parsell, Prendiville and Wooley \cite{Par}. Our focus is to obtain the upper bounds \eqref{main-1-dio}. This will be done via proving a sharp decoupling inequality.

For a measurable set $R\subset [0, 1]^2$ and a measurable function $g: R\to \C$, define the extension operator associated with $\mc{S}$ by 
\beq
E_R g(x)=\int_R g(t, s)e^{itx_1+isx_2+i\Phi_t(t, s)x_3+i\Phi_s(t, s)x_4+i\Phi(t, s)x_5} dtds. 
\endeq 
Here $x=(x_1, \dots, x_5)$. For a ball $B=B(c, R)\subset \R^5$ with center $c$ and radius $R$, we use the weight 
\beq
w_B(x)=(1+\frac{\|x-c\|}{R})^{-C},
\endeq
where $C$ is a large enough constant whose value will not be specified. For each $2\le q\le p$ and $0<\delta<1$, let $B_{p, q}(\delta)$ be the smallest constant such that 
\beq\label{decoupling-result}
\|E_{[0, 1]^2}g\|_{L^p(w_{B})}\le B_{p,q}(\delta) (\sum_{\substack{\Delta: \text{ square in } [0, 1]^2\\ l(\Delta)=\delta}}\|E_{\Delta}g\|_{L^p(w_{B})}^q)^{1/q},
\endeq
holds for each ball $B\subset \R^5$ of radius $\delta^{-3}$. Inequalities of this type are referred to as $l^q L^p$ decouplings. Via a standard reduction (see for instance Section 2 \cite{BD2}), Theorem \ref{main-1} follows from 
\begin{thm}\label{main-theorem-2}
We have 
\beq\label{desired-decoupling}
B_{9, 9}(\delta)\lesim (\frac{1}{\delta})^{2(\frac{1}{2}-\frac{1}{9})+\epsilon},
\endeq
for each $\epsilon>0$ and $0<\delta\le 1$.
\end{thm}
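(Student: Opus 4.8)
I would prove \eqref{desired-decoupling} by an induction-on-scales argument of Bourgain--Demeter--Guth type, organised around a broad--narrow dichotomy and driven by the affine symmetries of $\mc{S}$. The surface enjoys an anisotropic parabolic rescaling: since $\Phi$ is homogeneous, $(t,s)\mapsto(\lam t,\lam s)$ is matched by the linear map $\mathrm{diag}(\lam,\lam,\lam^2,\lam^2,\lam^3)$ on $\R^5$; and since $\Phi$ is a cubic, $\Phi(t+a,s+b)$, $\Phi_t(t+a,s+b)$, $\Phi_s(t+a,s+b)$ differ from $\Phi(t,s),\Phi_t(t,s),\Phi_s(t,s)$ only by affine functions of $(t,s)$, so a parameter translation is matched by an affine shear of $\R^5$ (here the rank-two condition on \eqref{non-dege-matrix} is already used, to absorb the quadratic-in-the-shift terms of $\Phi$ into the $\Phi_t,\Phi_s$ directions). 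Composing, for a square $\Delta\subset[0,1]^2$ of side $\sigma$ there is an affine automorphism of $\R^5$ carrying $E_\Delta g$ to $E_{[0,1]^2}\tilde g$, with $\delta$-squares in $\Delta$ going to $(\delta/\sigma)$-squares and --- this is why the radius $\delta^{-3}$ is the natural one --- balls of radius $\delta^{-3}$ going to anisotropic sets comparable (for decoupling purposes) to balls of radius $(\delta/\sigma)^{-3}$. Writing $\mathcal{D}(\delta)=B_{9,9}(\delta)$ and decoupling $[0,1]^2$ first into $\sigma$-squares and then rescaling each one, one gets a bootstrapping inequality $\mathcal{D}(\delta)\lesim A(\sigma,\delta)\,\mathcal{D}(\delta/\sigma)$, where $A(\sigma,\delta)$ is the constant for decoupling $[0,1]^2$ into $\sigma$-squares on $\delta^{-3}$-balls; the whole game becomes showing $A(\sigma,\delta)\lesim_\epsilon(\sigma^{-1})^{2(\frac12-\frac19)+\epsilon}$, i.e. that the exponent is exactly reproduced at the intermediate scale.

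To bound $A(\sigma,\delta)$ I would run the Bourgain--Guth decomposition at scale $\sigma$: at each point of the ball, either the contributing caps are essentially contained in the $\sigma$-neighbourhood of a single line $\ell\subset[0,1]^2$ (the narrow alternative, which one then iterates), or a fixed proportion of the mass comes from $\gtrsim$ transverse $\sigma$-caps (the broad alternative). For the broad part the target $L^9$ bound should follow from a multilinear (transverse) decoupling for $\mc{S}$, which in turn I would derive from the Bennett--Carbery--Tao multilinear restriction/Kakeya inequality (or, equivalently, Guth-type polynomial partitioning) once the Brascamp--Lieb datum attached to the five-dimensional tangent spaces of $\mc{S}$ at transverse points is checked to be nondegenerate. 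This nondegeneracy is exactly what the rank-two hypothesis on \eqref{non-dege-matrix} provides, and the exponent $9$ is the one singled out by the resulting Brascamp--Lieb exponents (equivalently, by the balancing $2r=4r-9$ behind Theorem \ref{main-1}).

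For the narrow part, when the caps cluster in the $\sigma$-neighbourhood of a line $\ell$ the surface degenerates in one parameter direction: restricted to a line through the origin, $\Phi$ reduces to a single cubic in one variable, so after the anisotropic rescaling of the slab around $\ell$ the problem collapses to a decoupling for a one-dimensional curve of degree at most three inside $\R^5$ --- which, writing $\mc{S}$ as an anisotropic cone $(t,s)=(uv,v)$, $\mc{S}=\{(uv,v,v^2A(u),v^2B(u),v^3C(u))\}$, is affinely a nondegenerate curve in $\R^3$ (a moment curve, or in the subcases where $\Phi$ has a repeated root a lower-degree curve, handled in a second chart). For such curves the sharp $l^2L^p$ decoupling of Bourgain--Demeter--Guth holds with loss $O_\epsilon((\sigma^{-1})^\epsilon)$ for all $p\le 12$, hence at $p=9$; combining this with the trivial decoupling over the remaining one-parameter family of lines controls the narrow contribution by an admissible power of $\sigma^{-1}$. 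Adding the broad and narrow bounds gives the required estimate for $A(\sigma,\delta)$ and, via the bootstrap above, closes the induction. The exponents $2r\ne 9$ in Theorem \ref{main-1} then follow by interpolating \eqref{desired-decoupling} with the trivial $L^2$ and $L^\infty$ decouplings.

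\textbf{Main obstacle.} I expect the crux to be the broad (multilinear) estimate together with making the numerology close: the multilinear input naturally supplies the Brascamp--Lieb exponent, the narrow/curve input lives at $p\le 12$, and the anisotropic cone-like rescaling must transfer $L^9$ information faithfully between scales, so one must verify that $A(\sigma,\delta)$ genuinely reproduces $(\sigma^{-1})^{2(\frac12-\frac19)}$ with no superfluous power of $\sigma^{-1}$. Checking the Brascamp--Lieb nondegeneracy uniformly over \emph{all} nondegenerate $\Phi$ --- in particular the cases with a repeated root, where the cone's base curve drops in degree --- is the technically delicate point, and it is there, rather than in any single auxiliary lemma, that the hypothesis on \eqref{non-dege-matrix} is essential.
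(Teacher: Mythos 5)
Your skeleton — parabolic rescaling from the translation--dilation invariance of $\mc{S}$, a Bourgain--Guth reduction to a multilinear inequality, and a Brascamp--Lieb transversality check where the rank-two hypothesis on \eqref{non-dege-matrix} enters — does match the paper's architecture (Sections 3--4 and Proposition \ref{linear-algebra}). But the core of your broad-case argument has a genuine gap: a single application of Bennett--Carbery--Tao multilinear restriction/Kakeya with the Brascamp--Lieb datum of the (two-dimensional) tangent spaces does not ``single out'' the exponent $9$. With $d=2$, $n=5$ the Brascamp--Lieb exponent is $2n/d=5$, far short of $9$, and no amount of transversality checking upgrades it. The paper reaches $p=9$ only through a BDG-type iteration with \emph{two} ball-inflation steps tied to two different Brascamp--Lieb data: inflation from $\delta^{-1}$-balls to $\delta^{-2}$-balls using the first-order tangent spaces $V^{(1)}_\xi$ ($d=2$), and inflation from $\delta^{-2}$-balls to $\delta^{-3}$-balls using the four-dimensional ``second-order tangent spaces'' $V^{(2)}_\xi$ spanned by $n_1,\dots,n_4$ ($d=4$). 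The $d=4$ transversality (Subsection \ref{subsection3.2}) is a separate, harder algebraic verification resting on the structural Taylor formula of Lemma \ref{general-taylor}, which also justifies the constancy of the relevant averages on the larger plates; this is the paper's main novelty and is absent from your proposal. Between inflations one must also decouple in the direction invisible to a $\delta^{-3}$-ball via the sharp $l^6L^6$ decoupling for the surface $(t,s,\Phi_t,\Phi_s,0)$ in $\R^4$ from \cite{BD-2in4}, and only after iterating with the H\"older weights $\alpha_1,\alpha_2,\beta_2$ and summing the geometric series \eqref{0107e5.20}--\eqref{0107e5.22} does the exponent $2(\tfrac12-\tfrac19)$ emerge.

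Two further points. First, the inequality being proved is an $l^9L^9$ decoupling, and the paper stresses that sharp $l^2L^p$ decoupling is \emph{not} strong enough to yield Theorem \ref{main-1}; your proposal repeatedly leans on $l^2$-type statements (the BDG curve decoupling, ``interpolating with trivial $L^2$ and $L^\infty$ decouplings''), so the bookkeeping of the $l^q$ index would not close as written. Second, your narrow alternative does not survive the numerology: trivially decoupling into the $\sigma^{-1}$ line-slabs already costs $(\sigma^{-1})^{1-\frac19}$ in $l^9$, which exceeds the total budget $(\sigma^{-1})^{2(\frac12-\frac19)}$ before the cone-over-moment-curve decoupling is even applied. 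In the paper the ``narrow'' case is simply the standard few-caps-dominate alternative, absorbed by parabolic rescaling inside the linear-versus-multilinear reduction (Theorem \ref{abc37}); no degenerate-curve analysis is needed there, while the genuinely degenerate behaviour is instead handled at the intermediate scales by the $\R^4$ decoupling input mentioned above.
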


The system \eqref{diophantine} is the cubic case of a system considered by Prendiville \cite{Pre}. The way that the surface \eqref{surface} and the Diophantine system \eqref{diophantine} are formulated is slightly different from those in Prendiville \cite{Pre}. There the surface \eqref{surface} is replaced by 
\beq
\mc{S}'=\{(\Phi_{tt}(t, s), \Phi_{ts}(t, s), \Phi_{ss}(t, s), \Phi_{t}(t, s), \Phi_{s}(t, s), \Phi(t, s)): (t, s)\in [0, 1]^2\}.
\endeq
That is, the surface $\mc{S}'$ is obtained by taking successive partial derivatives of the seed polynomial $\Phi$. However, under the non-degeneracy condition that the matrix \eqref{non-dege-matrix} has rank two, we observe that the vector space $[\Phi_{tt}, \Phi_{ts}, \Phi_{ss}]$ is always the same as $[t, s]$.  Hence the system of Diophantine equations associated with the surface $\mc{S}'$ is always equivalent to that associated with the surface $\mc{S}$, in the sense that they admit the same number of integer solutions. 

To obtain a system analogous to \eqref{diophantine} of higher degrees, one takes a seed polynomial $\Phi(t, s)$ of degree $k\ge 3$, extracts all the partial derivatives 
\beq\label{binary-2}
\frac{\partial^{i_1+i_2}\Phi(t, s)}{\partial t^{i_1} \partial s^{i_2}} \ \ \  ( i_1\ge 0, i_2\ge 0),
\endeq
and forms a Diophantine system by using all these partial derivatives. If we take $\Phi(t, s)$ to be the monomial $t^{k_1} s^{k_2}$ with $k_1\ge k_2\ge 1$, then we recover the so-called simple binary systems 
\beq\label{binary-3}
x^{i_1}_1y^{i_2}_1+\dots +x^{i_1}_ry^{i_2}_r=x^{i_1}_{r+1}y^{i_2}_{r+1}+\dots +x^{i_1}_{2r}y^{i_2}_{2r}, \text{ with } i_1\le k_1, i_2\le k_2 \text{ and } (i_1, i_2)\neq (0, 0),
\endeq
which appeared in recent work in quantitative arithmetic geometry (Section 4.15 \cite{tsc09} and \cite{val11}). Notice that if we take $\Phi$ to be a polynomial of degree $k$ that depends only on one variable, then we recover the Vinogradov system 
\beq
x^i_1+\dots +x^i_r=x^i_{r+1}+\dots +x^i_{2r}, \text{ with } 1\le i\le k.
\endeq
All the systems mentioned above fall into the framework of translation-dilation invariant systems, which are intensively studied in \cite{Par}. In our setting, this is reflected in the validity of the parabolic rescaling lemma (Lemma \ref{abc18}).

Parsell, Prendiville and Wooley \cite{Par} proved \eqref{main-1-dio} for $r\ge 21$, using the method of efficient congruencing. In the current paper we prove it for all $r\ge 1$, using the decoupling theory developed in \cite{BD1} and \cite{BDG}. When intending to generalise our proof to the above binary systems (\eqref{binary-2} or \eqref{binary-3}) of degrees higher than three, one encounters enormous difficulties. In comparison, the efficient congruencing method still provides bounds that are almost optimal. We refer to \cite{Par} for the precise statement of the corresponding results. \\

Let us mention a further application of the result in Theorem \ref{main-theorem-2}. This application has been worked out carefully in \cite{Pre}, \cite{Par} and \cite{Hen}, hence we mention it briefly. Let $\Phi$ be as above, a homogeneous polynomial of degree three that is non-degenerate. Take $r\in N$. Let $c_1, c_2, \dots, c_r$ with $c_1+c_2+\dots +c_r=0$ be a ``non-singular'' (Definition 1.1 \cite{Pre}) choice of coefficients for $\Phi$. Consider the equation 
\beq\label{roth}
c_1\Phi(x_1, y_1)+c_2\Phi(x_2, y_2)+\dots +c_r\Phi(x_r, y_r)=0.
\endeq
The solution $\{(x_1, y_1), \dots, (x_r, y_r)\}$ to the above equation is called diagonal is they all lie on a line in the plane. Take a large number $N\in \N$. Let $A\subset [0, N]^2$ be a set which contains only diagonal solutions to the equation \eqref{roth}. Then a result in \cite{Pre} (further improved in \cite{Par}) states that 
\beq
|A| \ll N^2 (\log \log N)^{-1/(s-1)},
\endeq
for $s$ bigger than certain threshold. The validity of the estimate \eqref{desired-decoupling} will further lower down this threshold. We refer the interested reader to \cite{Pre} and \cite{Hen} for the details. \\

In the end, we mention some novelties of our proof and explain briefly the potential difficulties that appear when trying to adapt our argument to binary systems of higher degrees. 

In decoupling theory, various Brascamp-Lieb inequalities (see \eqref{0510e2.2}) play fundamental roles. In order to apply these inequalities, one needs to check a transversality condition (see \eqref{bl-transversality}). When the dimensions and co-dimensions of the surfaces under consideration get higher and higher, checking these transversality conditions will become more and more difficult. In the current paper, we are dealing with a two dimensional surface in $\R^5$. To check \eqref{bl-transversality}, we further develop the idea introduced in \cite{BDG-2}, where a specific two dimensional surface in $\R^9$ is considered. As currently we are dealing with a class of surfaces, certain algebraic structures need to be explored. For instance, see Subsection \ref{subsection3.2}, in particular Lemma \ref{general-taylor}. \\

{\bf Notation:} Throughout the paper we will write $A\lesssim_{\upsilon}B$ to denote the fact that $A\le CB$ for a certain implicit constant $C$ that depends on the parameter $\upsilon$. Typically, this parameter is either $\epsilon$ or $K$. The implicit constant will never depend on the scale $\delta$, on the balls we integrate over, or on the function $g$. It will however most of the times depend on the Lebesgue index $p$.

We will denote by $B_R$ an arbitrary ball of radius $R$. We use the following notation for averaged integrals
$$\|F\|_{L^p_\sharp(w_B)}=(\frac1{|B|}\int|F|^pw_B)^{1/p}.$$
$|A|$ will refer to either the cardinality of $A$ if $A$ is finite, or to its Lebesgue measure if $A$ has positive measure.\\

{\bf Acknowledgements.} The author thanks Ciprian Demeter for reading this paper and giving several very useful suggestions. The authors also thanks Sean Prendiville for discussions on the applications of our main result. \\

\section{Brascamp-Lieb inequalities and ball-inflation lemmas}

Let $m$ be a positive integer. For $1\le j\le m$, let $V_j$ be a $d$-dimensional linear subspace of $\R^n$. Let also $\pi_j: \R^n\to V_j$ denote the orthogonal projection onto $V_j$. Define
\beq
\Lambda(f_1, f_2, ..., f_m)=\int_{\R^n} \prod_{j=1}^m f_j (\pi_j (x))dx,
\endeq
for $f_j: V_j\to \C$. We recall the following theorem due to Bennett, Carbery, Christ and Tao \cite{BCCT}.
\begin{thm}[\cite{BCCT}]\label{BL-inequality}
\label{bcct}
Given $p\ge 1$, the estimate
\beq\label{0510e2.2}
|\Lambda(f_1, f_2, ..., f_m)| \lesim \prod_{j=1}^m \|f_j\|_{p}
\endeq
holds if and only if $np=dm$ and the following Brascamp-Lieb transversality condition is satisfied
\beq\label{bl-transversality}
dim(V) \le \frac{1}{p} \sum_{j=1}^m dim(\pi_j(V)), \text{ for each linear subspace } V\subset \R^n.
\endeq
\end{thm}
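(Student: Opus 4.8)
Since the statement is the theorem of Bennett, Carbery, Christ and Tao, I will only outline how the proof goes; all inputs may be taken nonnegative, since $|\Lambda(f_1,\dots,f_m)|\le\Lambda(|f_1|,\dots,|f_m|)$ and the $L^p$ norms are unchanged. \emph{Necessity} is the easy half. The identity $\Lambda(f_1(\lambda^{-1}\cdot),\dots,f_m(\lambda^{-1}\cdot))=\lambda^n\,\Lambda(f_1,\dots,f_m)$ together with $\|f_j(\lambda^{-1}\cdot)\|_p=\lambda^{d/p}\|f_j\|_p$ shows that \eqref{0510e2.2} can hold for every $\lambda>0$ only if $n=dm/p$, i.e. $np=dm$. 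For \eqref{bl-transversality}, fix a subspace $V\subset\R^n$ and a large parameter $\rho$, and let $f_j$ be a smoothed indicator of $\{y\in V_j:\ \mathrm{dist}(y,\pi_j(V))\le 1,\ |y|\le\rho\}$; then $\|f_j\|_p^p\sim\rho^{\dim\pi_j(V)}$, while $f_j(\pi_j x)\gtrsim 1$ whenever $x\in V$ with $|x|\le\rho$, so $\Lambda\gtrsim\rho^{\dim V}$. Feeding these into \eqref{0510e2.2} and letting $\rho\to\infty$ gives $\dim V\le\frac1p\sum_j\dim\pi_j(V)$; the choice $V=\R^n$ just recovers the scaling relation.

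For \emph{sufficiency}, assume $np=dm$ and \eqref{bl-transversality}. The first step is to show (Lieb's theorem) that the optimal constant in \eqref{0510e2.2} is produced, in the limit, by centred Gaussians. The cleanest route is heat-flow monotonicity: evolve each input by the heat semigroup on $V_j$, writing $f_{j,t}=f_j\ast\gamma^{V_j}_t$, and prove that
\[
t\ \longmapsto\ \frac{\Lambda(f_{1,t},\dots,f_{m,t})}{\prod_{j=1}^m\|f_{j,t}\|_p}
\]
is non-decreasing. Differentiating in $t$ and using the heat equation, this reduces to a pointwise inequality between quadratic forms in the vectors $\pi_j^{*}\nabla\log f_{j,t}$, and \eqref{bl-transversality} is exactly the condition that makes the relevant form positive semidefinite. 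Since $f_{j,t}$ tends to a Gaussian as $t\to\infty$, one concludes that the constant in \eqref{0510e2.2} is bounded by the constant $\mathrm{BL}_g$ obtained by restricting to Gaussian inputs.

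It then remains to handle Gaussians. Writing $\iota_j:V_j\hookrightarrow\R^n$ for the inclusion (the adjoint of $\pi_j$) and testing with $f_j(y)=e^{-\pi\langle A_jy,y\rangle}$ for $A_j$ positive definite on $V_j$, elementary Gaussian integration gives $\|f_j\|_p^p=p^{-d/2}(\det A_j)^{-1/2}$ and $\Lambda=\det\bigl(\sum_j\iota_jA_j\iota_j^{*}\bigr)^{-1/2}$, whence, using $np=dm$,
\[
\mathrm{BL}_g^{\,2}\ =\ p^{\,n}\ \sup_{A_1,\dots,A_m>0}\ \det\Bigl(\textstyle\sum_{j=1}^m\iota_jA_j\iota_j^{*}\Bigr)^{-1}\prod_{j=1}^m(\det A_j)^{1/p}.
\]
The scaling relation $np=dm$ makes this supremum invariant under the diagonal rescaling $A_j\mapsto\lambda A_j$, so it can be infinite only through degeneration of the $A_j$ towards the boundary of the positive-definite cone; tracking such a degeneration shows that the expression blows up precisely when some subspace $V$ violates \eqref{bl-transversality}. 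Conversely, under \eqref{bl-transversality} I would prove finiteness by induction on $n$: if there is a nonzero proper \emph{critical} subspace $V$ (equality in \eqref{bl-transversality}), the data factor into their restrictions to $V$ and to $\R^n/V$, each of smaller dimension and still satisfying the hypotheses, and one multiplies the two bounds; in the absence of such a $V$, a compactness argument along the boundary of the parameter domain closes the estimate directly.

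The technical heart — the step I expect to be the main obstacle — is the heat-flow reduction: constructing the semigroup in this setting, justifying differentiation under the integral, and above all extracting from the transversality condition \eqref{bl-transversality} the pointwise positive-semidefiniteness that drives the monotonicity. The inductive analysis of critical subspaces in the Gaussian step is also delicate, though more elementary; the Gaussian integrals and the homogeneity bookkeeping are routine.
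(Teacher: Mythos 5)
The paper does not prove this statement at all: it is imported verbatim from Bennett--Carbery--Christ--Tao \cite{BCCT} as a black box, so there is no internal argument to compare yours against. Your outline is the standard BCCT strategy and its structure is correct. The necessity half (scaling to force $np=dm$, then testing \eqref{0510e2.2} on thickened $\rho$-balls of $\pi_j(V)$ to force \eqref{bl-transversality}) is essentially a complete proof. The sufficiency half is an accurate roadmap but not a proof: the two genuinely hard ingredients --- Lieb's theorem that Gaussians exhaust the constant, and the finiteness of the Gaussian supremum under \eqref{bl-transversality} via induction on critical subspaces --- are named rather than carried out, as you acknowledge. One point to correct if you ever flesh this out: in the heat-flow monotonicity argument the positive semidefiniteness of the relevant quadratic form does not come directly from \eqref{bl-transversality}; it comes from an algebraic identity satisfied by (near-)extremal Gaussian inputs (the ``geometric'' normalization $\sum_j \tfrac{1}{p}\,\pi_j^{*}A_j\pi_j=\mathrm{Id}$ after a linear change of variables), and the role of \eqref{bl-transversality} is to guarantee that such Gaussians exist. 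Your Gaussian computations and the homogeneity bookkeeping are correct.
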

An equivalent formulation of the estimate \eqref{0510e2.2} is
\beq\label{0510e2.4}
\|(\prod_{j=1}^m g_j\circ \pi_j)^{1/m} \|_q \lesim (\prod_{j=1}^m \|g_j\|_2)^{1/m},
\endeq
with $q=\frac{2n}{d}.$
The restriction that $p\ge 1$ becomes $dm\ge n$. In our proof, $m$ will always be a large constant, hence this condition is always satisfied. The transversality condition \eqref{bl-transversality} becomes
\beq\label{0510e2.5}
dim(V)\le \frac{n}{d m}\sum_{j=1}^m dim(\pi_j(V)), \text{ for each subspace } V\subset \R^n.
\endeq
Let us be more precise about the parameters in \eqref{0510e2.5}. We will take $n=5$ as our surface $\mc{S}$ lives in $\R^5$. The degree $m$ of multi-linearity will be chosen to be a large number. Our proof will make use of two different values of the parameter $d$: First of all, we will use $d=2$, which corresponds to that the surface $\mc{S}$ is two-dimensional; secondly, we also need to use $d=4$, as at certain stage of the proof, we will view $\mc{S}$ as a four-dimensional surface in $\R^5$. For instance, see Lemma \ref{main1} in Subsection \ref{subsection-torsion}.\\

Recall that the surface we are looking at is
$(t, s, \Phi_t(t, s), \Phi_s(t, s), \Phi(t, s)).$
Its tangent space is spanned by  
\beq\label{first-linear-space}
\begin{split}
n_1=(1, 0, \Phi_{tt}, \Phi_{st}, \Phi_{t}) \text{ and } n_2=(0, 1, \Phi_{ts}, \Phi_{ss}, \Phi_{s}).
\end{split}
\endeq
Moreover, we denote 
\beq\label{second-linear-space}
n_3=(0, 0, 1, 0, t) \text{ and } n_4=(0, 0, 0, 1, s).
\endeq
We will see from the following Lemma \ref{general-taylor} that these two vectors span the ``second order tangent space''. At a point $\xi\in [0, 1]^2$, let $V_{\xi}^{(1)}$ be the linear space spanned by $n_1(\xi)$ and $n_2(\xi)$ given in \eqref{first-linear-space}. Let $V_{\xi}^{(2)}$ be the linear space spanned by $n_1, n_2, n_3, n_4$ at the point $\xi$.\\

Let $K\in \N$ be a large number. It will be sent to infinity at the end of our proof. A $K$-square is defined to be a closed square of length $1/K$ inside $[0, 1]^2$. The collection of all dyadic $K$-squares will be denoted by $Col_K$.

\begin{prop}\label{linear-algebra}
Take $\Lambda \in \N$. Denote $m=\Lambda K$. Let $R_1, R_2, ..., R_m$ be different $K$-squares from $Col_{K}$.  For each $1\le i\le m$, choose one point $\xi_i\in R_i$. If we choose $\Lambda$ sufficiently large, independently on any parameter, then the transversality condition \eqref{0510e2.5} with $(d, n)=(2, 5)$ $($respectively $(4, 5))$ is satisfied for the collection of spaces $\{V_{\xi_j}^{(1)}\}_{j=1}^m$ $($respectively $\{V_{\xi_j}^{(2)}\}_{j=1}^m)$.
\end{prop}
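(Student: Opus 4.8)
The plan is to verify the Brascamp--Lieb transversality condition \eqref{0510e2.5} separately in the two cases $(d,n)=(2,5)$ and $(d,n)=(4,5)$. In both cases the quantity $\sum_{j=1}^m \dim(\pi_j(V))$ can only take finitely many values for a given $\dim(V)$; what really varies is how many of the $m$ projections $\pi_{V_{\xi_j}}(V)$ are allowed to \emph{degenerate}, i.e.\ have dimension strictly less than $\min(\dim V,d)$. So the strategy is: for each fixed dimension $k=\dim(V)\in\{1,\dots,4\}$, find a uniform upper bound $N_k$ on the number of $K$-squares $R$ such that \emph{there exists} a $k$-plane $V$ with $\dim(\pi_{V_\xi^{(i)}}(V))$ deficient for a point $\xi\in R$; then, since the $R_j$ are distinct and $m=\Lambda K$ with $\Lambda$ large, at most $N_k$ of the $m$ spaces can be deficient, while the remaining $\ge m-N_k$ contribute the full $\min(k,d)$, and one checks the resulting inequality $k\le \frac{n}{dm}\big((m-N_k)\min(k,d)+N_k\cdot(\text{something smaller})\big)$ holds once $\Lambda$ is large enough. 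The point is that $N_k$ will be $O(K)$ (coming from an algebraic variety in $[0,1]^2$ of dimension $1$, which a $1/K$-net meets in $O(K)$ cells), while $m=\Lambda K$, so taking $\Lambda$ large forces the deficient squares to be a negligible fraction.

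The algebraic heart is therefore to understand, for a candidate subspace $V$, the set of bad parameters $\xi=(t,s)$. For the $(d,n)=(2,5)$ case one works with $V_\xi^{(1)}=\langle n_1(\xi),n_2(\xi)\rangle$; for a line $V=\langle v\rangle$ the projection drops rank iff $v\perp V_\xi^{(1)}$, i.e.\ iff $v$ is orthogonal to both $n_1(\xi)$ and $n_2(\xi)$, and one shows that the $\xi$ satisfying two such independent linear-in-$\xi$ (after accounting for the polynomial entries) conditions cut out a proper subvariety of $[0,1]^2$ --- here the non-degeneracy of the matrix \eqref{non-dege-matrix} is what prevents the condition from being satisfied identically. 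For $\dim V=k\ge 2$ one argues that $\pi_{V_\xi^{(1)}}(V)$ has dimension $2$ unless $V$ contains a vector orthogonal to $V_\xi^{(1)}$, reducing to the previous count, and that even a $1$-dimensional image on a positive-measure set of $\xi$ is impossible by the rank-two hypothesis. The $(d,n)=(4,5)$ case is handled the same way but with $V_\xi^{(2)}=\langle n_1,n_2,n_3,n_4\rangle(\xi)$: here the crucial structural input is Lemma~\ref{general-taylor} (the Taylor-expansion/second-order-tangent-space statement referenced in Subsection~\ref{subsection3.2}), which identifies $V_\xi^{(2)}$ explicitly and, in particular, shows that the orthogonal complement $(V_\xi^{(2)})^\perp$ is one-dimensional and varies with $\xi$ in a genuinely non-constant way, so that ``$V$ contains the normal at $\xi_j$'' can happen for only $O(K)$ of the squares.

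Concretely I would proceed in the following order. First, record the explicit Gram/coordinate description of $n_1,n_2,n_3,n_4$ from \eqref{first-linear-space}--\eqref{second-linear-space}, and compute the normal vector(s) to $V_\xi^{(1)}$ and $V_\xi^{(2)}$ as polynomials in $(t,s)$ using the non-degeneracy matrix \eqref{non-dege-matrix}; this is where $\Phi$ being a non-degenerate cubic (equivalently \eqref{non-dege-matrix} having rank $2$) enters. Second, for each $k\in\{1,2,3,4\}$, identify the ``deficiency variety'' $Z_k\subset[0,1]^2$ of parameters $\xi$ for which some $k$-plane $V$ has deficient projection, show $Z_k$ is contained in the zero set of a nonzero polynomial (hence has measure zero and, more to the point, is covered by $O(K)$ squares of $Col_K$), and set $N_k$ to be that $O(K)$ bound. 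Third, plug into \eqref{0510e2.5}: the worst case is $\dim(\pi_j(V))$ as small as possible on the $\le N_k$ bad squares (namely $k-1$, or $0$ if $k>d$ forces nothing --- one checks the precise minimum deficiency is by $1$ in the relevant ranges), giving the requirement $k \le \frac{5}{dm}\big((m-N_k)\min(k,d) + N_k(\min(k,d)-1)\big)$, i.e.\ $k\,d m \le 5\big(m\min(k,d) - N_k\big)$; since $\min(k,d)\ge k d/5$ fails only in low dimensions one verifies case by case that for $m = \Lambda K$ with $N_k = O(K)$ the inequality holds once $\Lambda$ exceeds an absolute constant, which is exactly the claim. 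The main obstacle I expect is the second step in the $(4,5)$ case: pinning down precisely which $k$-planes can have deficient projection onto $V_\xi^{(2)}$ and proving the corresponding exceptional set is a \emph{proper} subvariety uniformly over the whole family of non-degenerate cubics $\Phi$ --- this is where one must genuinely exploit the algebraic structure (Lemma~\ref{general-taylor}) rather than argue dimension-counting in the abstract, and it is the analogue of the delicate transversality verification carried out in \cite{BDG-2}.
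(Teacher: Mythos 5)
Your overall strategy coincides with the paper's: reduce \eqref{0510e2.5} to a per-subspace statement, show the exceptional parameter set is contained in a low-degree algebraic subvariety of $[0,1]^2$ that meets only $O(K)$ squares of $Col_K$ (the paper invokes Wongkew's lemma \cite{Wo} for exactly this), and absorb the $O(K)$ bad squares into the integer slack $5\min(k,d)-kd\ge 1$ by taking $\Lambda$ large. However, there is a genuine logical gap in how you set up the count. You define $N_k$ (and later the ``deficiency variety'' $Z_k$) via the squares $R$ such that \emph{there exists} a $k$-plane $V$ with deficient projection at some $\xi\in R$. With that quantifier order the count is vacuous: for \emph{every} $\xi$ there exist deficient $k$-planes (for $d=2$, any $V$ contained in the $3$-dimensional kernel $(V_\xi^{(1)})^{\perp}$ projects to $\{0\}$), so $Z_k=[0,1]^2$ and $N_k=m$. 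Since \eqref{0510e2.5} quantifies over $V$ first, the correct reduction is: \emph{fix} $V$, show its deficiency locus $E_V\subset[0,1]^2$ lies in the zero set of a nonzero polynomial whose \emph{degree is bounded uniformly in $V$} (this uniformity is what makes the Wongkew constant, hence the admissible $\Lambda$, independent of $V$), and only then count. You do state the per-$V$ version elsewhere in the outline, so the fix is available, but as written the counting step and the final inequality do not cohere.

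Beyond that, the substance of the paper's proof is precisely the part you defer: the case-by-case verification over $\dim V=1,\dots,4$, separately for $d=2$ and $d=4$, that $E_V$ is proper. This uses the rank-two hypothesis on \eqref{non-dege-matrix} to rule out identically vanishing minors, and, for $d=4$, the reduction via Lemma \ref{general-taylor} and the map $\pi_{S_1\oplus S_2}P_{\xi}$ to the rank of explicit matrices whose first row is quadratic in $(t_0,s_0)$. Your characterization of $d=4$ deficiency as ``$V$ contains the normal to $V_\xi^{(2)}$'' is correct, but the claim that the minimum deficiency is ``by $1$'' fails for $d=2$ and $\dim V\in\{3,4\}$, where the projection can drop by $2$ or more because $\ker\pi^{(1)}_{\xi}$ is $3$-dimensional; this does not break the scheme --- a bad square can simply be counted as contributing $0$, and the integer slack still closes the inequality for $\Lambda$ large --- but the displayed inequality must be adjusted accordingly.
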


We will prove Proposition \ref{linear-algebra} in the following two subsections. How to check the Brascamp-Lieb transversality condition \eqref{0510e2.5} seems to have become a big obstacle in obtaining new decoupling inequalities associated with surfaces of high co-dimensions. For instance see \cite{BDG-2} where a particular two dimensional surface in $\R^9$ is considered. The forthcoming argument that corresponds to the case $d=4$ in Subsection \ref{subsection3.2} further develops the idea introduced in \cite{BDG-2}. From our argument, in particular Lemma \ref{general-taylor}, it will become clear that more algebraic structures need to understood in order to push our current results to homogeneous polynomials of degrees higher than three.

\subsection{Proof of Proposition \ref{linear-algebra}: The case $d=2$}

In this subsection we prove the first part of Proposition \ref{linear-algebra}. Let $\pi^{(1)}_{\xi}(V)$ denote the projection of the space $V$ on $V_{\xi}^{(1)}$. We will show that 
\beq\label{0105e2.8}
dim(V)\le \frac{5}{2} dim(\pi^{(1)}_{\xi}(V)) \text{ almost surely in } \xi.
\endeq
Let us assume \eqref{0105e2.8} for a moment and see how it implies the transversality condition \eqref{0510e2.5}. First of all, if we define an exceptional set
\beq
E_V:=\{\xi\in [0, 1]^2: dim(V)> \frac{5}{2} dim(\pi^{(1)}_{\xi}(V))\},
\endeq 
then \eqref{0105e2.8} implies that $E_V$ lies inside the zero set of a polynomial of degree less than $10$. 
However, Wongkew's lemma \cite{Wo} says that the $\frac{10}{K}$-neighbourhood of the zero set of such a polynomial will intersect at most $C K$ squares in $Col_K$ for some large constant $C$. The desired  transversality condition \eqref{0510e2.5} follows immediately if we choose $\Lambda=100 C$.\\

{\bf Case $\dim(V)=1 \text{ or } 2$.} The desired estimate \eqref{0105e2.8} is reduced to 
\beq\label{0105e2.9}
dim(\pi^{(1)}_{\xi}(V))=1 \text{ almost surely.}
\endeq 
Suppose $V=\text{span}\{u\}$ with $u=(u_1, u_2, u_3, u_4, u_5)$. Then \eqref{0105e2.9} is equivalent to 
\beq
(u\cdot n_1, u\cdot n_2)\not\equiv (0, 0).
\endeq
We argue by contradiction. Suppose $(u\cdot n_1, u\cdot n_2)=(0, 0)$ for every $\xi\in [0, 1]^2$. By checking the constant terms in the polynomials $u\cdot n_1$ and $u\cdot n_2$, we obtain $u_1=u_2=0$. By checking the highest order terms, we obtain $u_5$=0. These two facts further imply that the cross product 
\beq
(\Phi_{tt}, \Phi_{st})\times (\Phi_{ts}, \Phi_{ss})
\endeq
is constantly zero. However, by a direct calculation, this contradicts to the assumption that the polynomial $\Phi$ is non-degenerate.\\

{\bf Case $\dim(V)=3 \text{ or } 4$.} We need to show that $dim(\pi^{(1)}_{\xi}(V))\ge 2$ almost surely. This is done via a direct calculation. Clearly the case $\dim(V)=3$ is more difficult. Suppose $V=\text{span}\{u, v, w\}$. Then the dimension of $\pi^{(1)}_{\xi}(V)$ is equal to the rank of the matrix 
\beq
\begin{pmatrix}
u\cdot n_1 &  v\cdot n_1 & w\cdot n_1\\
u\cdot n_2 &  v\cdot n_2 & w\cdot n_2
\end{pmatrix}
\endeq
We argue by contradiction and suppose that the determinants of all the two by two minors vanish constantly. We look at the two by two minor formed by the first two columns. The determinant of the matrix
\beq\label{0105e2.13}
\begin{pmatrix}
u_1+u_3 \Phi_{tt}+u_4 \Phi_{st}+u_5\Phi_t & v_1+v_3 \Phi_{tt}+v_4 \Phi_{st}+v_5\Phi_t\\
u_2+u_3 \Phi_{ts}+u_4 \Phi_{ss}+u_5\Phi_s & v_2+v_3 \Phi_{ts}+v_4 \Phi_{ss}+v_5\Phi_s
\end{pmatrix}
\endeq
vanishes constantly. Denote 
\beq
d_{i, j}:=\det
\begin{pmatrix}
u_i & u_j\\
v_i & v_j
\end{pmatrix}
\endeq
We first look at the third order term, that is 
\beq
\begin{split}
& d_{5, 4} \Phi_t \Phi_{ss}+d_{5, 3}\Phi_t \Phi_{ts}+d_{3, 5} \Phi_{tt}\Phi_s+d_{4 ,5} \Phi_{st}\Phi_s\\
& =\Big( d_{3, 5}\frac{\partial}{\partial_t}\big(\frac{\Phi_t}{\Phi_s}\big)+d_{4, 5}\frac{\partial}{\partial_s}\big(\frac{\Phi_t}{\Phi_s}\big)\Big)\Phi_s^2\equiv 0.
\end{split}
\endeq
This further implies $d_{3, 5}=d_{4, 5}=0$. Moreover, we know $d_{1, 2}=0$ by checking the constant term of the determinant of the matrix \eqref{0105e2.13}. This further implies that 
\beq\label{0105e2.16}
(u_5, v_5, w_5)=(0, 0, 0),
\endeq 
as otherwise we would derive a contradiction that $(u, v, w)$, when viewed as a matrix of order $3\times 5$, has rank two or smaller. \\

Substitute the identity \eqref{0105e2.16} into \eqref{0105e2.13}, and look at the second order term of the determinant of \eqref{0105e2.13}. We obtain 
\beq
d_{3, 4}\Phi_{tt}\Phi_{ss}-d_{3, 4}\Phi_{st}\Phi_{st}\equiv 0.
\endeq
By the non-degeneracy assumption on $\Phi$, we obtain that $d_{3, 4}=0.$ This, together with \eqref{0105e2.16} and $d_{1, 2}=0$, implies that the $3\times 5$ matrix $(u, v, w)$ has rank two or smaller. Contradiction.

\subsection{Proof of Proposition \ref{linear-algebra}: The case $d=4$}\label{subsection3.2}

We let $\pi^{(2)}_{\xi}(V)$ denote the projection of the space $V$ on $V_{\xi}^{(2)}$.
We need to show that 
\beq
dim(V)\le \frac{5}{4} dim(\pi^{(2)}_{\xi}(V)) \text{ almost surely}.
\endeq
This amounts to calculating the dimension of 
\beq\label{projection-2}
\{(u\cdot n_1, u\cdot n_2, u\cdot n_3, u\cdot n_4): u\in V\}.
\endeq
Following \cite{BDG-2}, we define linear spaces 
\beq
S_1=[t, s]; S_2=[\Phi_t(t, s), \Phi_s(t, s)] \text{ and } S_3=[\Phi(t, s)].
\endeq
We need the following version of Taylor's formula. 
\begin{lem}\label{general-taylor}
If $f\in S_3$, then 
\beq\label{general-taylor-2}
\Delta f(t, s)\approx f_t(t, s) \Delta t+f_s(t, s) \Delta s+t\cdot f_t(\Delta t, \Delta s)+s\cdot f_s(\Delta t, \Delta s).
\endeq
Here $\Delta f(t, s)=f(t+\Delta t, s+\Delta s)-f(t, s)$. The error produced by the approximate identity is a third order homogeneous polynomial in $\Delta t$ and $\Delta s$.
\end{lem}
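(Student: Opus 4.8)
The plan is to verify the approximate identity \eqref{general-taylor-2} by a direct computation, exploiting the homogeneity of degree three of every $f\in S_3$. Write $f(t,s)=\alpha t^3+\beta t^2 s+\gamma ts^2+\delta s^3$. The exact Taylor expansion of $\Delta f(t,s)=f(t+\Delta t,s+\Delta s)-f(t,s)$ around $(t,s)$ terminates after the third-order term, so
\beq
\Delta f = f_t\,\Delta t+f_s\,\Delta s+\tfrac12\bigl(f_{tt}(\Delta t)^2+2f_{ts}\,\Delta t\,\Delta s+f_{ss}(\Delta s)^2\bigr)+P_3(\Delta t,\Delta s),
\endeq
where $P_3$ is the (constant-coefficient) cubic form $f(\Delta t,\Delta s)$, hence already a third-order homogeneous polynomial in $(\Delta t,\Delta s)$ and thus absorbable into the error. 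So the whole content of the lemma is the claim that the Hessian (second-order) term equals $t\cdot f_t(\Delta t,\Delta s)+s\cdot f_s(\Delta t,\Delta s)$ up to a cubic error.

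The key step is therefore Euler's identity for homogeneous functions: since $f_t$ is homogeneous of degree two, $2f_t(t,s)=t\,f_{tt}(t,s)+s\,f_{ts}(t,s)$, and similarly $2f_s(t,s)=t\,f_{ts}(t,s)+s\,f_{ss}(t,s)$. Applying these with the roles of the variables shifted, I would compute
\beq
t\cdot f_t(\Delta t,\Delta s)+s\cdot f_s(\Delta t,\Delta s)
= t\cdot\tfrac12\bigl(\Delta t\, f_{tt}(\Delta t,\Delta s)+\Delta s\, f_{ts}(\Delta t,\Delta s)\bigr)
+ s\cdot\tfrac12\bigl(\Delta t\, f_{ts}(\Delta t,\Delta s)+\Delta s\, f_{ss}(\Delta t,\Delta s)\bigr),
\endeq
and then observe that for a cubic $f$ the second partials $f_{tt},f_{ts},f_{ss}$ are linear forms, so $f_{tt}(\Delta t,\Delta s)=f_{tt}(t,s)+\bigl(f_{tt}(\Delta t,\Delta s)-f_{tt}(t,s)\bigr)$ and likewise for the others. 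Substituting and collecting, the ``$f_{**}(t,s)$'' parts reassemble into exactly $\tfrac12\bigl(f_{tt}(\Delta t)^2+2f_{ts}\Delta t\,\Delta s+f_{ss}(\Delta s)^2\bigr)$, which is the Hessian term we need, while the leftover terms are products of a linear form in $(t,s)$ and a quadratic form in $(\Delta t,\Delta s)$ minus their $(t,s)$ analogues — but a cleaner bookkeeping is to note that all the discarded pieces are homogeneous of total degree three and of degree $\ge 2$ in $\Delta$, hence third-order homogeneous in $(\Delta t,\Delta s)$ together with the genuinely $\Delta$-cubic term $f(\Delta t,\Delta s)$. Matching the leading-order-in-$\Delta$ behaviour on both sides then forces the identity modulo such errors.

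I expect the only mildly delicate point to be the precise accounting of which monomials count as ``error of third order in $\Delta t,\Delta s$'': one must check that every term dropped is genuinely homogeneous of degree $\ge 2$ in $(\Delta t,\Delta s)$ and not, say, linear in $\Delta$, since a linear-in-$\Delta$ discrepancy would not be a legitimate error for the later application. This is handled by the observation above that $f_t(t,s)\Delta t+f_s(t,s)\Delta s$ is the \emph{entire} degree-one-in-$\Delta$ part of $\Delta f$ (it is the differential), so no further degree-one-in-$\Delta$ terms can arise on the right-hand side either — indeed $t\cdot f_t(\Delta t,\Delta s)$ is degree two in $\Delta$, and $f(\Delta t,\Delta s)$ is degree three in $\Delta$. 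Hence the difference of the two sides of \eqref{general-taylor-2} is automatically supported on degrees $2$ and $3$ in $\Delta$, and the degree-two part vanishes by the Euler-identity computation, leaving precisely a third-order homogeneous polynomial in $(\Delta t,\Delta s)$ as the error, as claimed.
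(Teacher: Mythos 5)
Your overall strategy --- expand $\Delta f$ by its degree in $(\Delta t,\Delta s)$, observe that the degree-one part is exactly $f_t\,\Delta t+f_s\,\Delta s$, absorb the degree-three part $f(\Delta t,\Delta s)$ into the error, and reduce the lemma to identifying the Hessian term with $t\,f_t(\Delta t,\Delta s)+s\,f_s(\Delta t,\Delta s)$ --- is exactly the paper's strategy, and that target identity is in fact \emph{exact} (there is no degree-two error at all). But the computation you offer for it does not go through. After applying Euler's identity at $(\Delta t,\Delta s)$ you hold terms like $\tfrac12\,t\,\Delta t\,f_{tt}(\Delta t,\Delta s)$; substituting $f_{tt}(\Delta t,\Delta s)=f_{tt}(t,s)+\bigl(f_{tt}(\Delta t,\Delta s)-f_{tt}(t,s)\bigr)$ makes the ``kept'' piece equal to $\tfrac12\,t\,\Delta t\,f_{tt}(t,s)$, which is quadratic in $(t,s)$ and only \emph{linear} in $\Delta$. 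It is not $\tfrac12\,(\Delta t)^2 f_{tt}(t,s)$, so the $f_{**}(t,s)$ parts do not ``reassemble into the Hessian term'': the two expressions live in different bi-homogeneous components and could only coincide if both vanished. Worse, the discarded pieces contain $-\tfrac12\,t\,\Delta t\,f_{tt}(t,s)$ (hidden inside $f_{tt}(\Delta t,\Delta s)-f_{tt}(t,s)$), which is again linear in $\Delta$ --- precisely the kind of term you correctly flag as an inadmissible error. So the central degree-two identity is asserted but not proved.

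The fix is short, and two clean routes are available. The paper's route: $f(t+\Delta t,s+\Delta s)$ is symmetric under exchanging $(t,s)\leftrightarrow(\Delta t,\Delta s)$, so its component that is quadratic in $\Delta$ and linear in $(t,s)$ is obtained from the differential $f_t(t,s)\Delta t+f_s(t,s)\Delta s$ (the component linear in $\Delta$ and quadratic in $(t,s)$) by swapping the two pairs of variables, which yields $t\,f_t(\Delta t,\Delta s)+s\,f_s(\Delta t,\Delta s)$ on the nose. If you prefer an Euler-identity route, apply Euler to the \emph{linear} forms $f_{tt},f_{ts},f_{ss}$ at the point $(t,s)$, i.e.\ $f_{tt}(t,s)=t\,f_{ttt}+s\,f_{tts}$, $f_{ts}(t,s)=t\,f_{tts}+s\,f_{tss}$, $f_{ss}(t,s)=t\,f_{tss}+s\,f_{sss}$, insert these into the Hessian term and regroup by $t$ and $s$: the coefficient of $t$ is $\tfrac12\bigl((\Delta t)^2 f_{ttt}+2\Delta t\,\Delta s\,f_{tts}+(\Delta s)^2 f_{tss}\bigr)=f_t(\Delta t,\Delta s)$, since $f_t$ is a quadratic form with constant Hessian, and likewise the coefficient of $s$ is $f_s(\Delta t,\Delta s)$. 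Either way the identity is exact and the only error in \eqref{general-taylor-2} is the cubic form $f(\Delta t,\Delta s)$, as claimed.
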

\begin{proof}
By linearity, it suffices to consider $f(t, s)=\Phi(t, s)$. We calculate $\Phi(t+\Delta t, s+\Delta s)-\Phi(t, s)$ and view it as a homogeneous polynomial of four variables $t, s, \Delta t$ and $\Delta s$. First, we collection the linear terms with respect to $\Delta t$ and $\Delta s$. By the first order Taylor expansion, they are given by $\Phi_t(t, s) \Delta t+\Phi_s(t, s)\Delta s$, which are the former two terms on the right hand side of \eqref{general-taylor-2}. Next, we collect the quadratic terms with respect to $\Delta t$ and $\Delta s$. These terms must be linear in the variables $t$ and $s$. We apply the first order Taylor expansion again, with the roles of $(t, s)$ and $(\Delta t, \Delta s)$ exchanged, and obtain $t\Phi_t(\Delta t, \Delta s)+ s\Phi_s(\Delta t, \Delta s)$, which gives the latter two terms in \eqref{general-taylor-2}.
\end{proof}
This lemma can be written in the following equivalent way.
\beq
f(t, s)-f(t_0, s_0)\approx f_t(t_0, s_0) (t-t_0)+f_s(t_0, s_0) (s-s_0)+t_0\cdot f_t(t-t_0, s-s_0)+s_0\cdot f_s(t-t_0, s-s_0).
\endeq
According to this formula, let us consider 
\beq\label{0105e2.23}
f(t, s)=u_1 t+u_2 s+u_3 \Phi_t(t, s)+u_4 \Phi_s(t, s)+u_5 \Phi(t, s).
\endeq
At each point $\xi=(t_0, s_0)$, denote $\Delta t=t-t_0$ and $\Delta s=s-s_0$. We define 
\beq
(P_{\xi}f)(t, s)=f(\xi)+f_t(\xi)\cdot \Delta t+f_s(\xi)\cdot \Delta s + (u_3+u_5 t_0) \Phi_t(\Delta t, \Delta s)+(u_4+u_5 s_0) \Phi_s(\Delta t, \Delta s).
\endeq
Here we observe that 
\beq
P_{\xi} f=f \text{ for } f\in S_1\oplus S_2.
\endeq
We further define the canonical projection $\pi_{S_1\oplus S_2}$ onto the space $S_1\oplus S_2$. Hence 
\beq
\begin{split}
& (\pi_{S_1\oplus S_2} P_{\xi} f)(t, s)= (f_t(\xi)+(u_3+u_5t_0)\Phi_{tt}(-t_0, -s_0)+(u_4+u_5 s_0)\Phi_{st}(-t_0, -s_0)) t\\
& + (f_s(\xi)+(u_3+u_5t_0)\Phi_{ts}(-t_0, -s_0)+(u_4+u_5 s_0)\Phi_{ss}(-t_0, -s_0))s\\
& + (u_3+u_5 t_0) \Phi_t(t, s)+ (u_4+u_5 s_0) \Phi_s(t, s)\\
& = (u_1+u_5 \Phi_t(\xi)-u_5 t_0 \Phi_{tt}(\xi)-u_5 s_0 \Phi_{st}(\xi))t+(u_2+u_5 \Phi_s(\xi)-u_5 t_0 \Phi_{ts}(\xi)-u_5 s_0 \Phi_{ss}(\xi))s\\
&+ (u_3+u_5 t_0) \Phi_t(t, s)+ (u_4+u_5 s_0) \Phi_s(t, s).
\end{split}
\endeq
We can write 
\beq\label{projection-1}
\begin{split}
 \pi_{S_1\oplus S_2} P_{\xi} f =&(u_1+u_5 \Phi_t(\xi)-u_5 t_0 \Phi_{tt}(\xi)-u_5 s_0 \Phi_{st}(\xi),\\
& u_2+u_5 \Phi_s(\xi)-u_5 t_0 \Phi_{ts}(\xi)-u_5 s_0 \Phi_{ss}(\xi), u_3+u_5 t_0, u_4+u_5 s_0).
\end{split}
\endeq
Recall the choice of the function $f$ in \eqref{0105e2.23}.
Let us compare the vector in \eqref{projection-1} with the vector in \eqref{projection-2}, which is given by 
\beq
(u_1+u_3 \Phi_{tt}+u_4 \Phi_{st}+u_5 \Phi_t, u_2+u_3 \Phi_{ts}+u_4 \Phi_{ss}+u_5 \Phi_s, u_3+u_5 t_0, u_4+u_5 s_0).
\endeq
By some simple row and column transformations, we see that 
\beq
\begin{split}
& dim\big( \{(u\cdot n_1, u\cdot n_2, u\cdot n_3, u\cdot n_4): u\in V\}\big)\\
& =dim \big( \{\pi_{S_1\oplus S_2}P_{\xi}f: f\in V\} \big).
\end{split}
\endeq
Hence what we need to show becomes 
\beq
dim(V)\le \frac{5}{4} dim(\{\pi_{S_1\oplus S_2}P_{\xi}f: f\in V\}) \text{ almost surely}.
\endeq

{\bf Case $\dim(V)=1$.} The is the same as the case $\dim(V)=1$ and $d=2$.\\

{\bf Case $\dim(V)=2$.} We need to show that $dim(\pi_{S_1\oplus S_2}P_{\xi}(V))= 2$ almost surely. Argue by contradiction. Suppose $dim(\pi_{S_1\oplus S_2}P_{\xi}(V))\le 1$ everywhere. Then 
\beq
V=\pi_{S_1\oplus S_2}(V)\oplus S_3.
\endeq
Let us calculate the projection of $S_3$ on $S_1\oplus S_2$. Take 
\beq
f(t, s)=\Phi(t, s)=a t^3+bt^2 s+c t s^2+d s^3.
\endeq
Hence 
\beq
\pi_{S_1\oplus S_2}P_{\xi}f=(-3at_0^2-2bt_0s_0-cs_0^2, -bt_0^2-2ct_0s_0-3ds_0^2, t_0, s_0)
\endeq
As we know that 
\beq
V=\pi_{S_1\oplus S_2}(V)\oplus S_3,
\endeq
if we write $\pi_{S_1\oplus S_2}(V)=\text{span}\{u\}$ with $u=(u_1, u_2, u_3, u_4)$, then the dimension of $\pi_{S_1\oplus S_2}P_{\xi}(V)$ is equal  to the rank of the matrix 
\beq
\begin{pmatrix}
-3at_0^2-2bt_0s_0-cs_0^2 & -bt_0^2-2ct_0s_0-3ds_0^2 & t_0 & s_0\\
u_1 & u_2 & u_3 & u_4
\end{pmatrix}
\endeq
For every nonzero vector $u$, this matrix has rank two almost surely. \\

{\bf Case $\dim(V)=3$.} We need to show that $dim(\pi_{S_1\oplus S_2}P_{\xi}(V))= 3$ almost surely. Suppose not. Then by taking $\xi=(0, 0)$, we obtain that $dim(\pi_{S_1\oplus S_2}(V))=2$. Moreover, 
\beq
V=\pi_{S_1\oplus S_2}(V)\oplus S_3.
\endeq
Write $\pi_{S_1\oplus S_2}(V)=\text{span}\{u, v\}$ with $u=(u_1, u_2, u_3, u_4)$ and $v=(v_1, v_2, v_3, v_4)$. We need to show that the matrix 
\beq
\begin{pmatrix}
-3at_0^2-2bt_0s_0-cs_0^2 & -bt_0^2-2ct_0s_0-3ds_0^2 & t_0 & s_0\\
u_1 & u_2 & u_3 & u_4\\
v_1 & v_2 & v_3 & v_4
\end{pmatrix}
\endeq
has rank three almost surely. By calculating the determinants of all the $3\times 3$ minors, it is not difficult to see that this is indeed the case. \\

{\bf Case $\dim(V)=4$.} We need to show that $dim(\pi_{S_1\oplus S_2}P_{\xi}(V))= 4$ almost surely. Similar as above, we prove by contradiction. In the end, we need to show that the matrix 
\beq\label{matrix-4}
\begin{pmatrix}
-3at_0^2-2bt_0s_0-cs_0^2 & -bt_0^2-2ct_0s_0-3ds_0^2 & t_0 & s_0\\
u_1 & u_2 & u_3 & u_4\\
v_1 & v_2 & v_3 & v_4\\
w_1 & w_2 & w_3 & w_4
\end{pmatrix}
\endeq
has rank four almost surely. Argue by contradiction. Suppose the determinant of the above matrix vanishes constantly. By checking the linear terms in $t_0$ and $s_0$, we obtain that 
\beq
\det \begin{pmatrix}
u_1 & u_2 & u_3\\
v_1 & v_2 & v_3\\
w_1 & w_2 & w_3
\end{pmatrix}
=\det \begin{pmatrix}
u_1 & u_2 & u_4\\
v_1 & v_2 & v_4\\
w_1 & w_2 & w_4
\end{pmatrix}
=0.
\endeq
This implies that the two vectors $(u_1, v_1, w_1)$ and $(u_2, v_2, w_2)$ are linearly dependent. That the determinant of the matrix \eqref{matrix-4} vanishes constantly contradicts to the non-degeneracy of the polynomial $\Phi$. 

\subsection{Multi-linear Kakeya inequalities and ball-inflation lemmas}\label{subsection-torsion}

In Proposition \ref{linear-algebra}, we verified a transversality condition. As a consequence, we have the following multi-linear Kakeya inequality. 

\begin{lem}[Multi-linear Kakeya]\label{0720lemma3.5h}
Let $d_{1}=2$ and $d_{2}=4$. For every $\iota\in \{1, 2\}$, we have the following estimate: Let $M=\Lambda K$ where $\Lambda$ is the same as the one in Proposition \ref{linear-algebra}. Let $R_1, ..., R_M$ be different sets from $Col_K$. Consider $M$ families $\mc{P}_j$ consisting of rectangular boxes $P$ in $\R^5$, that we refer to as  plates,  having the following properties\\
1) For each $P\in \mc{P}_j$, there exits $\xi_j=(t_j, s_j)\in R_j$ such that $d_{\iota}$ sides of $P$ have lengths equal to $R^{1/2}$ and span $V^{(\iota)}_{\xi_j}$, while the remaining $(5-d_{\iota})$ sides have lengths $R$;\\
2) all plates are subsets of a ball $B_{4R}$ of radius $4R$.\\
Then we have the following inequality
\beq
\frac{1}{|B_{4R}|}\int_{B_{4R}} | \prod_{j=1}^M F_j |^{\frac{1}{M}\frac{5}{d_{\iota}}} \lesim_{\epsilon,\nu} R^{\epsilon}\left[ \prod_{j=1}^M  \left( \frac{1}{|B_{4R}|}|\int_{B_{4R}} F_j |\right)^{\frac{1}{M}} \right]^{\frac{5}{d_{\iota}}}
\endeq
for each function $F_j$ of the form
\beq
F_j=\sum_{P\in \mc{P}_j} c_P 1_P.
\endeq
The implicit constant does not depend on $R$ or $c_P$.
\end{lem}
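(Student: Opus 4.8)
The plan is to derive Lemma \ref{0720lemma3.5h} from the Brascamp-Lieb inequality (Theorem \ref{bcct}) together with the transversality already established in Proposition \ref{linear-algebra}, following the by now standard passage from Brascamp-Lieb estimates to multilinear Kakeya estimates in decoupling theory (see \cite{BDG}, \cite{BDG-2}). Fix $\iota\in\{1,2\}$, write $d=d_\iota\in\{2,4\}$, and keep $n=5$. The relevant Brascamp-Lieb datum is the collection $\{V^{(\iota)}_{\xi_j}\}_{j=1}^M$ of $d$-dimensional subspaces, equipped with the single exponent $p=dM/n$; since $M=\Lambda K$ is large we have $dM\ge n$, so the hypothesis $p\ge 1$ of Theorem \ref{bcct} holds, while the scaling relation $np=dm$ holds by the choice of $p$. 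By Proposition \ref{linear-algebra} the transversality condition \eqref{0510e2.5}, which is precisely \eqref{bl-transversality} for this value of $p$, is satisfied; hence Theorem \ref{bcct}, in the form \eqref{0510e2.4} with $q=2n/d$, applies to the orthogonal projections $\pi^{(\iota)}_{\xi_j}$.

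It then remains to pass from this $L^2$-based inequality, valid for functions of the special form $g_j\circ\pi^{(\iota)}_{\xi_j}$, to the asserted inequality for the plate functions $F_j=\sum_{P\in\mc{P}_j}c_P 1_P$, and this I would carry out by the usual localization argument. After rescaling $B_{4R}$ by $R^{-1}$ and then dilating by $R^{1/2}$, each plate becomes a box of width $1$ in the $d$ directions spanning $V^{(\iota)}_{\xi_j}$ and of length $R^{1/2}$ in the remaining $5-d$ directions, all contained in a ball of radius $\sim R^{1/2}$. One tiles that ball by unit cubes $\Delta$ and invokes the locally constant principle: on each $\Delta$ the function $F_j$ agrees, up to harmless boundary errors, with a function of $\pi^{(\iota)}_{\xi_j}x$ alone, because its variation in the directions of $V^{(\iota)}_{\xi_j}$ occurs at scale $\gtrsim 1$ while each plate has length $\gg 1$ in the complementary directions. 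Applying the Brascamp-Lieb inequality on each $\Delta$ gives $\int_\Delta\prod_j F_j^{n/(dM)}\lesssim\prod_j(\int_{\Delta^*}F_j)^{n/(dM)}$ for a slight enlargement $\Delta^*$ of $\Delta$; summing over $\Delta$ by H\"older (using $\sum_j n/(dM)=n/d\ge 1$, the bounded overlap of the $\Delta^*$, and the elementary bound $\sum_k x_k^{\,r}\le(\sum_k x_k)^r$ for $r\ge 1$ and $x_k\ge 0$) produces the asserted estimate, the $R^\epsilon$ factor leaving ample room for the boundary errors incurred along the way. The discrepancy of a factor $2$ between the Lebesgue exponent $2n/d$ in \eqref{0510e2.4} and $n/d$ here is accounted for by $F_j$ playing the role of the square of an $L^2$ function, while the averaged integrals $\frac{1}{|B_{4R}|}\int$ on the two sides are matched by the Jacobian of the rescaling.

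The one point that genuinely requires care --- and which I expect to be the main obstacle --- is the uniformity of the implied constant: it must not depend on $R$ or on the coefficients $c_P$ (both automatic from the homogeneity of the inequality and the $R^\epsilon$ room), nor on the particular choice of the $K$-squares $R_1,\dots,R_M$, nor, crucially, on $M=\Lambda K$, which tends to infinity. Dependence on $M$ is avoided because the correct proof of multilinear Kakeya --- whether via Guth's polynomial method or via the stability of Brascamp-Lieb constants of Bennett, Bez, Flock and Lee --- yields a constant depending only on $n$, $d$, $\epsilon$, and a transversality parameter, and not on the number of families; and the transversality parameter needed here stays uniform because the proof of Proposition \ref{linear-algebra}, through Wongkew's lemma \cite{Wo}, controls all the relevant exceptional sets by polynomials of bounded degree (less than $10$), so the estimate does not deteriorate as $K\to\infty$. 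Granting this uniformity, the argument of the previous paragraph applies without change to both $\iota=1$ (where $d=2$) and $\iota=2$ (where $d=4$), completing the proof.
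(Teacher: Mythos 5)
Your overall strategy --- Theorem \ref{bcct} plus the transversality supplied by Proposition \ref{linear-algebra}, with attention to uniformity in $R$, $c_P$ and $M$ --- is the right starting point, and it is essentially all the paper itself records: the proof is omitted there and attributed to Guth \cite{Guth} and Bennett--Bez--Flock--Lee \cite{BBFL}, with the explicit remark that the deduction proceeds via an \emph{induction-on-scales} argument. The problem is that the self-contained derivation in your second paragraph is not that argument and does not prove the stated inequality. After your rescaling the plates are $1\times\cdots\times1\times R^{1/2}\times\cdots\times R^{1/2}$ in a ball $B$ of radius $\sim R^{1/2}$, and the tiling-plus-Brascamp--Lieb step correctly gives $\int_\Delta\prod_jF_j^{5/(d M)}\lesssim\prod_j(\int_{\Delta^*}F_j)^{5/(d M)}$ on each unit cube $\Delta$. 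But the subsequent summation, via H\"older and $\sum_k x_k^{5/d}\le(\sum_k x_k)^{5/d}$, only yields the \emph{unnormalized} inequality $\int_B\prod_jF_j^{5/(dM)}\lesssim\prod_j(\int_BF_j)^{5/(dM)}$. The asserted inequality is the \emph{averaged} one, which after clearing denominators reads $\int_B\prod_jF_j^{5/(dM)}\lesssim R^\epsilon\,|B|^{1-5/d}\prod_j(\int_BF_j)^{5/(dM)}$; since $5/d>1$ the factor $|B|^{1-5/d}$ is a large negative power of $R$ (in the original variables $\sim R^{-15/2}$ for $d=2$ and $\sim R^{-5/4}$ for $d=4$), so what you prove is weaker than what is claimed by a fixed positive power of $R$, not by $R^\epsilon$. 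This is not a normalization slip to be absorbed by ``the Jacobian of the rescaling'': the averaged form is the scale-invariant one, and the entire content of a multilinear Kakeya inequality --- that transverse plates overlap on small sets --- lives precisely in that factor $|B|^{1-5/d}$. The unnormalized form you obtain is essentially trivial and would hold even for parallel plates.

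The missing idea is the genuine induction on scales. Each plate meets $\sim R^{(5-d)/2}$ of the cubes $\Delta$, so the quantities $\int_{\Delta^*}F_j$ are individually far smaller than $\int_BF_j$, and the crude bound $\sum_\Delta(\int_{\Delta^*}F_j)^{5/d}\le(\sum_\Delta\int_{\Delta^*}F_j)^{5/d}$ discards exactly this gain. The standard remedy (Guth \cite{Guth}, \cite{BBFL}, and the scheme behind Theorem 6.5 of \cite{BDG}) is to iterate the same inequality inside each intermediate ball at scales $R^{3/4},R^{7/8},\dots$, using the stability of the Brascamp--Lieb constant under the $O(1/K)$ perturbations of the subspaces $V^{(\iota)}_{\xi}$ as $\xi$ ranges over a fixed $R_j$; the $R^{\epsilon}$ in the statement is precisely the price of the $O(\log\log R)$ iteration steps, which is a further sign that a single application of Theorem \ref{bcct} cannot suffice. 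Your third paragraph correctly names these references and correctly explains why the constant is uniform in $M$ and in the transversality data; the fix is to let that machinery (rather than the one-step argument of your second paragraph) carry the deduction, with Proposition \ref{linear-algebra} supplying the only input that is new to this paper.
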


Lemma \ref{0720lemma3.5h} is essentially due to Guth \cite{Guth}, and Bennett, Bez, Flock and Lee \cite{BBFL}. It follows from the Brascamp-Lieb inequalities in Theorem \ref{BL-inequality} via an induction-on-scales argument, after verifying the corresponding transversality conditions. Here we leave out the details. These multi-liner Kakeya inequalities have the following consequences.  
\begin{lem}[Ball-inflation lemmas]\label{main1}
Let $d_1=2$ and $d_2=4$. For every $\iota\in \{1, 2\}$, we have the following estimate: Let $R_1, ..., R_M$ be different squares from $Col_K$. Let $B$ be an arbitrary ball in $\R^5$ of radius $\rho^{-\iota-1}$. Let $\mc{B}$ be a finitely overlapping cover of $B$ with balls $\Delta$ of radius $\rho^{-\iota}$. For each $g:[0, 1]^2\to \C$, we have
\beq\label{0410e3.1}
\begin{split}
& \frac{1}{|\mc{B}|} \sum_{\Delta\in \mc{B}}\left[ \prod_{i=1}^M\left( \sum_{\substack{R'_i: \text{ square in } R_i\\ l(R'_i)=\rho}} \|E_{R'_i}g\|_{L^{\frac{d_{\iota}p}{5}}_{\#}(w_{\Delta})}^{\frac{d_{\iota}p}{5}} \right)^{\frac{5}{d_{\iota} p}} \right]^{\frac{p}{M}}\\
& \lesim_{\epsilon,\nu} \rho^{-\epsilon} \left[ \prod_{i=1}^M\left( \sum_{\substack{R'_i: \text{ square in } R_i\\ l(R'_i)=\rho}} \|E_{R'_i}g\|_{L^{\frac{d_{\iota}p}{5}}_{\#}(w_B)}^{\frac{d_{\iota}p}{5}} \right)^{\frac{5}{d_{\iota}p}} \right]^{\frac{p}{M}}.
\end{split}
\endeq
\end{lem}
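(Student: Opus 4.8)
The plan is to deduce the ball-inflation inequality \eqref{0410e3.1} from the multi-linear Kakeya inequality in Lemma \ref{0720lemma3.5h} by the now-standard wave-packet decomposition argument, following the scheme of Bourgain--Demeter--Guth. First I would fix $\iota\in\{1,2\}$, set $R=\rho^{-\iota}$ (so that the small balls $\Delta$ have radius $R$ and the big ball $B$ has radius $R^{1+1/\iota}=\rho^{-\iota-1}$; the relation between $\rho$ and $R$ is chosen so the parabolic-rescaling geometry of the surface matches the plate dimensions). For each $K$-square $R_i$ and each $\rho$-square $R_i'\subset R_i$, the function $E_{R_i'}g$ is, on any ball $\Delta$ of radius $R$, morally constant at scale $R^{1/2}$ along the tangent directions and at scale $R$ in the normal directions, because the surface piece over $R_i'$ is, after affine rescaling, within $O(R^{-1/2})$ of its tangent plane. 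This is what lets us replace $|E_{R_i'}g|$ on $\Delta$ by (a constant multiple of) the indicator of a plate $P$ that has $d_\iota$ sides of length $R^{1/2}$ spanning $V^{(\iota)}_{\xi_i}$ and $5-d_\iota$ sides of length $R$, where $\xi_i$ is a point of $R_i$; and it is precisely for these plates that Proposition \ref{linear-algebra} has verified the transversality hypothesis of Lemma \ref{0720lemma3.5h}.

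The key steps, in order, are: (1) Reduce to functions $g$ Fourier-supported on a single $\rho$-square inside each $R_i$, or rather pass to the single-scale wave-packet picture: write $E_{R_i'}g=\sum_{T} a_T \psi_T$ where $T$ ranges over a tiling by plates dual to $R_i'$ and $\psi_T$ is an $L^\infty$-normalised bump adapted to $T$; the point is $\|E_{R_i'}g\|_{L^{p'}_\#(w_\Delta)}^{p'}\approx \frac1{|\Delta|}\sum_{T\cap\Delta\neq\emptyset}|a_T|^{p'}|T|$ for $p'=d_\iota p/5$, with comparable statements for $w_B$. (2) Define $F_i=\sum_T |a_T|^{p'} 1_{T}$ restricted to plates meeting $B$; then, after the above replacement, the left-hand side of \eqref{0410e3.1} is bounded by $\frac1{|\mc B|}\sum_{\Delta}\prod_i\big(\frac1{|\Delta|}\int_\Delta F_i\big)^{\cdots}$, which, summing the geometric-type average over the cover $\mc B$ of $B$, is $\lesssim \frac1{|B_{4R'}|}\int |\prod_i F_i|^{\frac1M\frac5{d_\iota}}$ for a slightly enlarged ball — here one uses that the quantity $\frac1{|\Delta|}\int_\Delta F_i$ is roughly constant as $\Delta$ varies, or more honestly one applies Hölder/the trivial embedding to pass from the $\Delta$-average of the product to the product of $\Delta$-averages in the right direction. (3) Apply Lemma \ref{0720lemma3.5h} with $B_{4R}$ the enlarged ball, obtaining the bound $R^\epsilon\big[\prod_i\big(\frac1{|B|}\int_B F_i\big)^{1/M}\big]^{5/d_\iota}$. (4) Translate back: $\frac1{|B|}\int_B F_i\approx \sum_{R_i'}\|E_{R_i'}g\|_{L^{p'}_\#(w_B)}^{p'}$, which is exactly the right-hand side of \eqref{0410e3.1} after raising to the appropriate powers; finally absorb $R^\epsilon=\rho^{-\iota\epsilon}$ into $\rho^{-\epsilon}$ by renaming $\epsilon$.

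The main obstacle, and the step deserving the most care, is step (1)--(2): justifying that $|E_{R_i'}g|$ is comparable on $\Delta$ to a sum of plate indicators with the stated dimensions and orientation. One has to be precise about the two cases $d_\iota=2$ and $d_\iota=4$: for $d_\iota=2$ the plate is thin ($R^{1/2}$) in the two tangent directions $V^{(1)}_{\xi_i}$ and thick ($R$) in the three normal directions — this is the classical picture and uses only that the surface over a $\rho$-square deviates from its tangent plane by $O(\rho^2/\rho^{-1}\cdot\rho^{-1})$... i.e. lies in an $R^{1/2}$-neighbourhood of the tangent plane inside $B$; for $d_\iota=4$ one is using the coarser approximation by the four-dimensional osculating space $V^{(2)}_{\xi_i}$ (spanned by $n_1,\dots,n_4$), for which the relevant deviation is one order smaller, consistent with the different relation $R=\rho^{-\iota}$ — this is the genuinely new ingredient inherited from \cite{BDG-2} and makes essential use of Lemma \ref{general-taylor}, which identifies the second-order tangent space and shows the error is a cubic form in $\Delta t,\Delta s$. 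Once the plate structure is pinned down with the correct dimensions matched to the correct power of $\rho$, the rest is bookkeeping with $L^{p'}_\#$ norms and Hölder, and the transversality needed to invoke Lemma \ref{0720lemma3.5h} is already in hand from Proposition \ref{linear-algebra}. A secondary nuisance is the rapidly decaying weights $w_\Delta$ versus $w_B$: one has to note $\sum_{\Delta\in\mc B}w_\Delta\lesssim w_B$ up to the usual tails, and that restricting plates to those meeting $B_{4R}$ rather than $B$ costs only a harmless constant because of the rapid decay of the weights.
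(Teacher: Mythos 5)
Your route is the same as the paper's: both derive \eqref{0410e3.1} from the multilinear Kakeya inequality of Lemma \ref{0720lemma3.5h}, the whole content being that $|E_{R'_i}g|$ (for $\iota=1$) and its local average over a ball of radius $\rho^{-2}$ (for $\iota=2$) are essentially constant on plates oriented along $V^{(1)}_{\xi}$, respectively $V^{(2)}_{\xi}$, via the first-order Taylor expansion, respectively Lemma \ref{general-taylor}; the paper states exactly these two constancy claims and defers the remaining bookkeeping to \cite{BDG}. You identify both claims and the role of Proposition \ref{linear-algebra}, so the architecture is right.

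There is, however, a concrete error in your normalisation of scales that would derail the argument as written. You set $R=\rho^{-\iota}$, the radius of the \emph{small} balls $\Delta$, and take plates with $d_\iota$ short sides of length $R^{1/2}$ and long sides of length $R$ inside $B_{4R}$. With that choice Lemma \ref{0720lemma3.5h} only compares averages over a ball of radius $4\rho^{-\iota}$ with averages over that same ball, so it cannot effect the inflation from $\Delta$ (radius $\rho^{-\iota}$) to $B$ (radius $\rho^{-\iota-1}$); moreover for $\iota=1$ your plates, of dimensions $(\rho^{-1/2})^2\times(\rho^{-1})^3$, are not the constancy boxes of $|E_{R'_i}g|$. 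The correct choice is $R=\rho^{-\iota-1}$, the radius of the \emph{large} ball. Then for $\iota=1$ the plates are $(\rho^{-1})^2\times(\rho^{-2})^3$, which are precisely the boxes on which $|E_{R'_i}g|$ is essentially constant (the surface over a $\rho$-square deviates from its tangent plane by $O(\rho^{2})=O(R^{-1})$, not $O(R^{-1/2})$ as you write); for $\iota=2$ the plates are $(\rho^{-3/2})^4\times\rho^{-3}$, and they tile the larger boxes $(\rho^{-2})^4\times\rho^{-3}$ on which the $\Delta$-averaged quantity is constant by Lemma \ref{general-taylor}. A smaller point: the claimed identity $\|E_{R'_i}g\|_{L^{p'}_\#(w_\Delta)}^{p'}\approx\frac{1}{|\Delta|}\sum_{T\cap\Delta\neq\emptyset}|a_T|^{p'}|T|$ is not literally true, since wave packets interfere; the standard substitute is to take $F_i$ to be the locally averaged function $x\mapsto\|E_{R'_i}g\|^{p'}_{L^{p'}_\#(w_{B(x,\rho^{-\iota})})}$, observe that it is essentially constant on the plates just described, and hence comparable to a sum $\sum_P c_P 1_P$ to which Lemma \ref{0720lemma3.5h} applies. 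With these corrections your outline coincides with the intended proof.
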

\begin{proof}
We will follow the proof in Bourgain, Demeter and Guth \cite{BDG}, that is, we will derive Lemma \ref{main1} from Lemma \ref{0720lemma3.5h}. In order to apply Lemma \ref{0720lemma3.5h}, we need to check that the function $|E_{R'_i}g|$ is essentially a constant on a plate whose two short sides span the linear space $V^{(1)}_{\xi_j}$ for some $\xi_j\in R'_j$. Moreover, we need to check that for each ball $\Delta$ of radius $\rho^{-2}$, the function $\|E_{R'_i}g\|_{L_{\#}^{\frac{4p}{5}}(w_{\Delta})}$ is essentially a constant on a plate whose four short sides span the linear space $V^{(2)}_{\xi_j}$ for some $\xi_j\in R'_j$. The former statement follows from the standard Taylor expansion, and the latter one follows from Lemma \ref{general-taylor}. We comment here that this is what we meant previously by viewing the surface $\mc{S}$ as a four dimensional surface in $\R^5$. For the rest of the details, we refer to \cite{BDG}.
\end{proof}

The idea of ball-inflations originated from the work of Bourgain, Demeter and Guth \cite{BDG} (see Theorem 6.6 there). If we replace the $l^{\frac{d_{\iota}p}{5}}$ summations over $R'_i\subset R_i$ on both sides of \eqref{0410e3.1} by $l^2$ sums, essentially we arrive at the ball-inflation estimates that are proven in \cite{BDG}. Moreover, as has been pointed out above, the proof of Theorem 6.6 in \cite{BDG} also works for \eqref{0410e3.1}. However there are some subtle differences when it comes to applying these ball-inflation estimates in the iteration argument in Section \ref{section-iteration}.

In \cite{BDG}, the authors there used $l^2$ sums over $R'_i\subset R_i$, in order to prove certain sharp $l^2 L^p$ decoupling inequalities associated to moment curves. In our case, sharp $l^2L^p$ decouplings are no long able to imply good enough estimates as in Theorem \ref{main-1}. Indeed, this issue already appeared in earlier attempts of trying to push the argument of \cite{BDG} to higher dimensions, see \cite{BDG-2} and \cite{GZ18} (see \cite{BD2} for an even earlier work). In the present paper, we follow the way in which ball-inflation lemmas are formulated in the work \cite{GZ18} by Zhang and the author.

\section{Parabolic rescaling}
In this section we state the following result which is referred to as  parabolic rescaling.
\begin{lem}
\label{abc18}
Let $0<\delta<\sigma\le 1$. Then for each square $R\subset [0, 1]^2$ with side length $\sigma$ and each ball $B\subset \R^5$ with radius $\delta^{-3}$ we have
\beq
\|E_{R} g\|_{L^p(w_B)} \le B_{p,q}(\frac{\delta}{\sigma}) (\sum_{R'\subset R:\; l(R')=\delta} \|E_{R'} g\|_{L^{p}(w_B)}^{q})^{1/q}.
\endeq
\end{lem}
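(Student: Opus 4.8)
The plan is to deduce Lemma \ref{abc18} from the very definition of $B_{p,q}(\delta)$ by means of an affine change of variables that maps the square $R$ of side length $\sigma$ onto the unit square $[0,1]^2$, while respecting the structure of the surface $\mathcal S$. Concretely, if $R$ has lower-left corner $(t_0,s_0)$, I would set $t = t_0+\sigma u$, $s=s_0+\sigma v$ for $(u,v)\in[0,1]^2$. The point is that the system $(t,s,\Phi_t,\Phi_s,\Phi)$ is translation-dilation invariant: expanding $\Phi(t_0+\sigma u, s_0+\sigma v)$ and its first derivatives around $(t_0,s_0)$ and collecting terms, one finds that the phase $tx_1+sx_2+\Phi_t(t,s)x_3+\Phi_s(t,s)x_4+\Phi(t,s)x_5$ becomes, after absorbing lower-order contributions into a new linear change of the dual variable $x\mapsto \tilde x = L_{t_0,s_0,\sigma}(x)$ (an affine map depending only on $R$), exactly $u\tilde x_1 + v\tilde x_2 + \Phi_u(u,v)\tilde x_3 + \Phi_v(u,v)\tilde x_4 + \Phi(u,v)\tilde x_5$ up to a fixed modulation $e^{i\langle a, x\rangle}$ that has modulus one and hence disappears in all $L^p$ norms. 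This is precisely the manifestation of the parabolic rescaling structure alluded to around Lemma \ref{abc18}; since $\Phi$ is homogeneous of degree three, the homogeneity makes the bookkeeping clean — the cubic part rescales by $\sigma^3$, the quadratic first-derivative part by $\sigma^2$, and so on, each power being exactly compensated by a dilation in the corresponding $x$-coordinate.

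The key steps, in order, would be: (i) write down the affine substitution on $(t,s)$ and carefully compute the transformed phase, identifying the linear map $L=L_{t_0,s_0,\sigma}$ on the $x$-variables and checking that it is invertible with Jacobian depending only on $\sigma$ (a power of $\sigma$); (ii) observe that under this substitution $E_R g(x) = \sigma^2 e^{i\langle a,x\rangle} E_{[0,1]^2}\tilde g(Lx)$ where $\tilde g(u,v) = g(t_0+\sigma u, s_0+\sigma v)$, and that sub-squares $R'\subset R$ of side length $\delta$ correspond bijectively to squares $\Delta\subset[0,1]^2$ of side length $\delta/\sigma$; (iii) track the weights: $B = B(c,\delta^{-3})$ pulls back under $L$ to a region comparable to a ball $\tilde B$ of radius $(\delta/\sigma)^{-3}$, and $w_B(x)$ is comparable to $w_{\tilde B}(Lx)$ up to acceptable constants because $L$ is affine with controlled distortion (this uses that the weight $w_B$ has a fixed polynomial profile and is robust under affine maps of bounded eccentricity); (iv) apply the definition \eqref{decoupling-result} of $B_{p,q}(\delta/\sigma)$ to $\tilde g$ on the ball $\tilde B$; (v) change variables back and reassemble, noting that the constant $\sigma^2$ and the Jacobian factors cancel identically on the two sides of the inequality because the same power of $\sigma$ appears in $\|E_R g\|_{L^p}$ and in each $\|E_{R'}g\|_{L^p}$.

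The main obstacle — really the only nontrivial point — is step (iii), the comparison of weights. The decoupling constant $B_{p,q}$ is defined with the weight $w_B$ attached to a genuine \emph{ball}, whereas the pullback of a ball under the affine map $L$ is an ellipsoid whose eccentricity could a priori grow with $\sigma^{-1}$; one must check that the coordinate dilations hidden in $L$ are mild enough (powers of $\sigma\le 1$, with $\sigma$ a fixed scale and not tending to $0$ relative to $\delta/\sigma$) that $\tilde B$ can be taken to be an honest ball of radius $\asymp(\delta/\sigma)^{-3}$ with $w_B \circ L^{-1} \lesssim w_{\tilde B} \lesssim$ (a finite sum of translates of) $w_B\circ L^{-1}$. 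Since $C$ in the definition of $w_B$ is an unspecified large constant, this kind of loss is harmless: one simply absorbs the eccentricity into the polynomial decay, exactly as in the standard references. I would handle this by the usual device of passing to a slightly fattened ball and invoking the rapid decay of $w_B$ to dominate it by a constant multiple of the rescaled weight; this is the routine ``weight juggling'' that the paper elsewhere refers to as standard, so I would state it and refer to \cite{BD2} or \cite{BDG} for the mechanical verification rather than reproduce it.
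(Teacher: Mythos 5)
Your proposal is correct and is essentially the argument the paper intends: the paper's own ``proof'' consists of citing the standard parabolic rescaling computation (Proposition 7.1 of \cite{BD2}) and noting that translation--dilation invariance of $\mc{S}$ follows from Lemma \ref{general-taylor}, which is exactly the expansion you carry out when you absorb the quadratic-in-$(u,v)$ cross terms $t_0\Phi_t(u,v)+s_0\Phi_s(u,v)$ into the rescaled dual variables $\tilde x_3,\tilde x_4$. Your step (iii) is the right place to be careful --- the image of $B$ under $L$ is an ellipsoid of eccentricity $\sigma^{-2}$, so one covers it by finitely overlapping balls of radius $(\delta/\sigma)^{-3}$ and sums with Minkowski (using $q\le p$) rather than treating it as a single ball --- but this is precisely the routine weight manipulation you correctly defer to the references.
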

The proof of this lemma is standard, see for instance Proposition 7.1 from \cite{BD2}. One just needs to observe that our surface $\mc{S}$ is translation and dilation invariant, as can be seen via Lemma \ref{general-taylor}.

The parabolic rescaling lemma plays a determinant role in decoupling theory. It is used in every iteration step. First of all, it is used to run the Bourgain-Guth scheme, in order to show the equivalence between the linear and multilinear decoupling inequalities (Theorem \ref{abc37}). Secondly, it is used in the iteration scheme in Section \ref{section-iteration}  to conclude the desired decoupling inequality \eqref{desired-decoupling}.

\section{Linear versus multilinear decoupling}\label{2405sub3.2}

In this section we introduce a multi-linear version of the desired decoupling inequality. Recall that $K$ is a large number and $M=\Lambda K$. We denote by $B_{p, q}(\delta, K)$ the smallest constant such that
\beq\label{0720e3.20h}
\|(\prod_{i=1}^{M} E_{R_i} g)^{1/M}\|_{L^p(w_B)}\le B_{p, q}(\delta, K) \prod_{i=1}^{M} (\sum_{R_i'\subset R_i:\; l(R'_i)=\delta} \|E_{R'_i} g\|_{L^p(w_B)}^{q} )^{\frac{1}{qM}}.
\endeq
holds true for all distinct squares $R_i\in Col_K$, each ball  $B\subset \R^9$ of radius $\delta^{-3}$, and each $g:[0,1]^2\to\C$.

By H\"older's inequality, we see that the multi-linear decoupling constant $B_{p, q}(\delta, K)$ can be controlled by the linear decoupling constant $B_{p, q}(\delta)$. It turns out that, in the case $p=q$, the reverse direction also essentially holds true. That is,

\begin{thm}
\label{abc37}
For each $p\ge 2$ and $K\in \N$, there exists $\Omega_{K, p}>0$ and $\beta(K, p)>0$ with
\beq
\lim_{K\to \infty} \beta(K, p)=0, \text{ for each }p,
\endeq
such that for each small enough $\delta$, we have
\beq
B_{p, p}(\delta)\le \delta^{-\beta(K, p)-2(\frac{1}{2}-\frac{1}{p})}+ \Omega_{K, p} \log_K \big(\frac{1}{\delta} \big)\max_{\delta \le \delta'\le 1}(\frac{\delta'}{\delta})^{2(\frac{1}{2}-\frac{1}{p})} B_{p, p}(\delta', K).
\endeq
\end{thm}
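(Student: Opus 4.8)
The plan is to establish Theorem \ref{abc37} via the Bourgain--Guth argument, which is by now a standard tool for converting between linear and multilinear decoupling constants when $p = q$. First I would fix the scale $\delta$, a ball $B$ of radius $\delta^{-3}$, and a function $g$, and perform a broad-narrow decomposition at the scale $1/K$: partition $[0,1]^2$ into the squares $R \in Col_K$, and at every point $x$ either one $K$-square carries a constant fraction of $\|E_{[0,1]^2}g(x)\|$ (the narrow case), or the significant contributions come from $\gtrsim M$ essentially distinct $K$-squares (the broad case). In the narrow case one has pointwise $|E_{[0,1]^2}g(x)| \lesssim K^{O(1)}\max_{R}|E_R g(x)|$ for a single $R$; raising to the $p$-th power, summing over the $K^2$ choices of $R$, and applying parabolic rescaling (Lemma \ref{abc18}) with $\sigma = 1/K$ to each $\|E_R g\|_{L^p}$ produces the factor $B_{p,p}(K\delta)$ together with an extra $(K)^{2(1/2-1/p)}$ coming from the rescaling normalization, which after iterating $\log_K(1/\delta)$ times accounts for the second term on the right-hand side, with the $\max_{\delta\le\delta'\le1}$ absorbing the fact that narrow-dominated points at different stages live at different intermediate scales.

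In the broad case one invokes the multilinear decoupling constant: pointwise $|E_{[0,1]^2}g(x)| \lesssim_K (\prod_{i=1}^M |E_{R_i}g(x)|)^{1/M}$ for $M$ distinct $K$-squares $R_1,\dots,R_M \in Col_K$, so that integrating over $B$ gives a bound by $K^{O(1)}\,B_{p,p}(\delta,K)$ times the $\ell^p$-square-function quantity. The key quantitative point is that the multilinear constant $B_{p,p}(\delta,K)$ obeys, by the ball-inflation machinery of Section~2 together with parabolic rescaling — this is exactly the iteration carried out in Section \ref{section-iteration} and/or in \cite{BDG}, \cite{GZ18} — a bound of the shape $B_{p,p}(\delta,K) \lesssim_{K,\epsilon} \delta^{-\epsilon - 2(1/2-1/p)}$ for $p$ in the relevant range, or at least a bound that feeds back into the recursion with a gain. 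Collecting the two cases yields
\beq
B_{p,p}(\delta) \lesssim_K \delta^{-\beta(K,p) - 2(1/2-1/p)} + \Omega_{K,p}\log_K(1/\delta)\max_{\delta\le\delta'\le1}\Big(\frac{\delta'}{\delta}\Big)^{2(1/2-1/p)} B_{p,p}(\delta',K),
\eeq
which is the claimed inequality; the exponent $\beta(K,p)$ collects all the $K^{O(1)}$ losses divided by $\log(1/\delta)$ after the logarithmically many iterations of the broad-narrow split, and it tends to $0$ as $K\to\infty$ because the number of narrow steps is $\log_K(1/\delta)$ while each costs only a fixed power of $K$.

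The main obstacle — and the reason this theorem is nontrivial rather than a black-box citation — is bookkeeping the rescaling normalization correctly so that the narrow term telescopes to produce precisely the factor $(\delta'/\delta)^{2(1/2-1/p)}$ with the right power $2(1/2-1/p)$ (the $2$ being the surface dimension), and ensuring that the $K$-dependent constants incurred at each of the $\sim\log_K(1/\delta)$ iterations of the narrow alternative combine into an exponent $\beta(K,p)$ that genuinely vanishes as $K\to\infty$ uniformly for $\delta$ small; this requires running the broad-narrow recursion cleanly rather than naively, essentially the argument of \cite[Section~2]{BD2} adapted to our five-dimensional setting. A secondary technical point is that parabolic rescaling (Lemma \ref{abc18}) must be applicable at every intermediate scale, which is guaranteed here by the translation-dilation invariance of $\mc S$ recorded after Lemma \ref{abc18}, so no new input is needed there. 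I would present the broad-narrow dichotomy and the narrow iteration in detail and simply cite \cite{BD2} for the routine parts of the telescoping.
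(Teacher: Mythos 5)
Your overall plan is the right one, and it is in fact what the paper itself does: the paper offers no proof of Theorem \ref{abc37} beyond the remark that it is ``essentially the same as that of Theorem 8.1 from [BD16-2]'', and that theorem is proved by exactly the Bourgain--Guth broad--narrow recursion you describe. Two of your side remarks are harmless but off target: the $\max_{\delta\le\delta'\le 1}$ and the factor $(\delta'/\delta)^{2(\frac12-\frac1p)}$ come from the \emph{broad} alternative being triggered at the various intermediate scales $\delta'=K^{-j}$ (each such occurrence contributes one additive term with $B_{p,p}(\delta',K)$, whence the $\log_K(1/\delta)$ prefactor), not from the narrow points; and no bound on $B_{p,p}(\delta,K)$ is needed or available at this stage --- invoking the Section 5 iteration here would be circular, since that iteration is what Theorem \ref{abc37} is later combined with. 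The theorem simply carries $B_{p,p}(\delta',K)$ on the right-hand side as an unknown.

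There is, however, one genuine error in your reasoning, and it concerns the only nontrivial quantitative claim in the statement, namely $\lim_{K\to\infty}\beta(K,p)=0$. You assert this holds ``because the number of narrow steps is $\log_K(1/\delta)$ while each costs only a fixed power of $K$.'' That arithmetic fails: if each of the $n=\log_K(1/\delta)$ multiplicative narrow iterations loses a factor $K^{c}$ with $c>0$ fixed, the total loss is $K^{cn}=\delta^{-c}$, i.e. $\beta(K,p)=c$ does \emph{not} tend to $0$ as $K\to\infty$. For the argument to work, the pointwise dichotomy must be set up so that the term which is iterated multiplicatively (the single-cap maximum in the narrow alternative) carries only an \emph{absolute} constant $C$ independent of $K$, giving total loss $C^{n}=\delta^{-\log C/\log K}$ and hence $\beta(K,p)\approx \log C/\log K\to 0$; all $K^{O(1)}$ factors must instead be attached to the broad (multilinear) term, where they are incurred only additively, at most $\log_K(1/\delta)$ times, and are absorbed into $\Omega_{K,p}$. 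Your written version, with $|E_{[0,1]^2}g(x)|\lesssim K^{O(1)}\max_R|E_Rg(x)|$ in the narrow case, would destroy the conclusion. This is precisely the point where the recursion must be run ``cleanly rather than naively,'' as you yourself note in your final paragraph, so the fix is available in [BD16-2], but as written your justification of $\beta(K,p)\to0$ is not correct.
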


The proof of this theorem is standard, and is essentially the same as that of Theorem 8.1 from \cite{BD2}. Hence we leave it out.

\section{Iteration}\label{section-iteration}
In this section, we run the final iteration argument. The consequence of this iteration, combined with Theorem \ref{abc37}, will lead to the desired decoupling inequality \eqref{desired-decoupling}. \\

There will be two terms that are involved in the iteration procedure. They are 
\beq\label{iteration-term-2}
D_p(q, B^r):=(\prod_{i=1}^M \sum_{J_{i, q\subset R_i}}\|E_{J_{i, q}}g\|_{L^p_{\#}(w_{B^r})}^p)^{\frac{1}{pM}}
\endeq
and
\beq\label{iteration-term-1}
A_{p} (q, B^r, s)=\Big( \frac{1}{|\mc{B}_s(B^r)|} \sum_{B^s\in \mc{B}_s(B^r)} D_{2}(q, B^s)^{p} \Big)^{1/p}.
\endeq
Here for a positive number $r$, we use $B^r$ to denote a ball of radius $\delta^{-r}$, and $\mc{B}_s(B^r)$ denotes a finitely overlapping collection of balls $B^s$ that lie inside of a ball $B^r$.
 In the notation $J_{i, q}$, the index $i$ indicates that this square lies in $R_i$, and $q$ indicates that the square $J_{i, q}$ has side length $\delta^{q}$.

Define $\alpha_1, \alpha_2,\beta_2\in (0,1)$ as follows
$$
\frac{1}{\frac{2p}{5}}=\frac{\alpha_1}{\frac{4p}{5}}+\frac{1-\alpha_1}{2},
$$

$$
\frac{1}{\frac{4p}{5}}=\frac{\alpha_2}{p}+\frac{1-\alpha_2}{6},
$$

$$
\frac{1}{6}=\frac{1-\beta_2}{2}+\frac{\beta_2}{\frac{4p}{5}}.
$$
We will start our iteration with the term 
\beq
A_{p}(1, B^3, 1)=\Big( \frac{1}{|\mc{B}_1(B^3)|} \sum_{B^1\in \mc{B}_1(B^3)} D_{2}(1, B^1)^{p} \Big)^{1/p}.
\endeq
By H\"older's inequality, it can be bounded by 
\beq\label{180419e5.4}
\delta^{-2(\frac{1}{2}-\frac{5}{2p})}\Big( \frac{1}{|\mc{B}_1(B^3)|} \sum_{B^1\in \mc{B}_1(B^3)} D_{\frac{2p}{5}}(1, B^1)^{p} \Big)^{1/p}.
\endeq
We apply Lemma \ref{main1} with $\iota=1$ to \eqref{180419e5.4} and bound it by
\beq\label{180419e5.5}
\delta^{-2(\frac{1}{2}-\frac{5}{2p})-\epsilon}\Big( \frac{1}{|\mc{B}_2(B^3)|} \sum_{B^2\in \mc{B}_2(B^3)} D_{\frac{2p}{5}}(1, B^2)^{p} \Big)^{1/p}.
\endeq
By H\"older's inequality, the right hand side of \eqref{180419e5.5} can be dominated by
\beq\label{0410e4.4}
\delta^{-2(\frac{1}{2}-\frac{5}{2p})-\epsilon}\Big( \frac{1}{|\mc{B}_2(B^3)|} \sum_{B^2\in \mc{B}_2(B^3)} D_{\frac{4p}{5}}(1, B^2)^{p} \Big)^{\frac{\alpha_1}{p}} \Big( \frac{1}{|\mc{B}_2(B^3)|} \sum_{B^2\in \mc{B}_2(B^3)} D_{2}(1, B^2)^{p} \Big)^{\frac{1-\alpha_1}{p}}.
\endeq
By $L^2$ orthogonality, this can be bounded by 
\beq\label{0107e5.6}
\delta^{-2(\frac{1}{2}-\frac{5}{2p})-\epsilon}\Big( \frac{1}{|\mc{B}_2(B^3)|} \sum_{B^2\in \mc{B}_2(B^3)} D_{\frac{4p}{5}}(1, B^2)^{p} \Big)^{\frac{\alpha_1}{p}} A_p(2, B^3, 2)^{1-\alpha_1}.
\endeq
In the next step, we apply Lemma \ref{main1} with $\iota=2$ and obtain 
\beq\label{0107e5.8}
\delta^{-2(\frac{1}{2}-\frac{5}{2p})-\epsilon}D_{\frac{4p}{5}}(1, B^3)^{\alpha_1} A_p(2, B^3, 2)^{1-\alpha_1}.
\endeq
The last term $A_p(2, B^3, 2)^{1-\alpha_1}$ is ready for iteration. We further process the $D$-term. By H\"older's inequality  
\beq
D_{\frac{4p}{5}}(1, B^3) \lesim D_6(1, B^3)^{1-\alpha_2} D_p(1, B^3)^{\alpha_2}.
\endeq
The second term on the right hand side is already of the form of the term in the decoupling inequality \eqref{decoupling-result}, and it will not be further processed. It is the former term on the right hand side that needs further process.

Notice that in the term $D_6(1, B^3)$, we are dealing with terms $\|E_{J_{i, 1}}g\|_{L^6_{\#}(w_{B^3})}$. By the uncertainty principle, such a ball of radius $\delta^{-3}$ is not able to distinguish the surface $\mc{S}$ from 
\beq\label{2in4surface}
\{(t, s, \Phi_t(t, s), \Phi_s(t, s), 0): (t, s)\in J_{i, 1}\}
\endeq
under certain affine transformations. By the $l^6 L^6$ decoupling estimate for the surface \eqref{2in4surface} obtained in \cite{BD-2in4},  we obtain 
\beq
D_6(1, B^3) \lesim \delta^{-(\frac{1}{2}-\frac{1}{6})-\epsilon}D_6(\frac{3}{2}, B^3).
\endeq
By H\"older's inequality, this can be further bounded by 
\beq
\begin{split}
D_6(1, B^3)  & \lesim \delta^{-(\frac{1}{2}-\frac{1}{6})-\epsilon}D_6(\frac{3}{2}, B^3)\\
		& \lesim  \delta^{-(\frac{1}{2}-\frac{1}{6})-\epsilon}D_2(\frac{3}{2}, B^3)^{1-\beta_2} D_{\frac{4p}{5}}(\frac{3}{2}, B^3)^{\beta_2}\\
		& \lesim \delta^{-(\frac{1}{2}-\frac{1}{6})-\epsilon} D_2(3, B^3)^{1-\beta_2} D_{\frac{4p}{5}}(\frac{3}{2}, B^3)^{\beta_2}.
\end{split}
\endeq
In the last step, we applied $L^2$ orthogonality. In the end, what we have obtained so far can be organised as 
\beq\label{0410e4.11}
\begin{split}
& A_p(1, B^3, 1)  \lesim_\epsilon \delta^{-\epsilon-2(\frac{1}{2}-\frac{5}{2p}) -(\frac{1}{2}-\frac{1}{6})\alpha_1(1-\alpha_2)}\times \\
		& A_p(2, B^3, 2)^{1-\alpha_1} A_p(3, B^3, 3)^{\alpha_1 (1-\alpha_2)(1-\beta_2)}D_{\frac{4p}{5}}(\frac{3}{2}, B^3)^{\alpha_1 (1-\alpha_2)\beta_2} D_p(1, B^3)^{\alpha_1 \alpha_2}.
\end{split}
\endeq
Now we run this iteration procedure for $r$ many times. For all balls $B$ of radius $\delta^{-2\cdot (\frac{3}{2})^r}$, we have 
\beq\label{0410e4.12}
\begin{split}
& A_p(1, B, 1) \lesim_{\epsilon,r} (\frac{1}{\delta})^{\epsilon+2(\frac{1}{2}-\frac{5}{2p})} \times\underbrace{\prod_{i=0}^{r-1} (\frac{1}{\delta})^{(\frac{3}{2})^i (\frac{1}{2}-\frac{1}{6}) \alpha_1 (1-\alpha_2) [(1-\alpha_2)\beta_2]^{i}}}_{l^{6} L^6 \text{ decoupling }} \\
& \times A_p(2, B, 2)^{1-\alpha_1}  D_{\frac{4p}{5}}\Big((\frac{3}{2})^r, B \Big)^{\alpha_1 [(1-\alpha_2)\beta_2]^r}\\
& \left(\prod_{i=1}^r A_p(2 (\frac{3}{2})^i, B, 2 (\frac{3}{2})^i)^{\alpha_1 (1-\alpha_2)(1-\beta_2)[(1-\alpha_2)\beta_2]^{i-1}}\right)\left(\prod_{i=0}^{r-1}D_p((\frac{3}{2})^i, B)^{\alpha_1 \alpha_2 [(1-\alpha_2)\beta_2]^i} \right).
\end{split}
\endeq
Define
\beq\label{0410e4.13}
\begin{split}
& \gamma_0=1-\alpha_1; \gamma_i=\alpha_1 (1-\alpha_2)(1-\beta_2)[(1-\alpha_2)\beta_2]^{i-1}, \text{ for } 1\le i\le r;\\
& b_i=2\cdot (\frac{3}{2})^i, \text{ for } 0\le i\le r;\\
& \tau_r=\alpha_1 [(1-\alpha_2)\beta_2]^r; \tau_i=\alpha_1 \alpha_2 [(1-\alpha_2)\beta_2]^i, \text{ for }0\le i\le r-1;\\
& w_i= \frac{1-\alpha_2}{2\alpha_2}\tau_i, \text{ for } 0\le i\le r-1.
\end{split}
\endeq
We can write using H\"older
$$D_{\frac{4p}{5}}\Big((\frac{3}{2})^r, B \Big)\lesssim D_{p}\Big((\frac{3}{2})^r, B \Big).$$
With these,  the estimate \eqref{0410e4.12} becomes
\beq\label{}
\begin{split}
A_p(1, B, 1)& \lesim_{r,\epsilon} (\frac{1}{\delta})^{\epsilon+2(\frac{1}{2}-\frac{5}{2p})}\Big( \prod_{i=0}^{r-1} (\frac{1}{\delta})^{ (\frac{1}{2}-\frac{1}{6}) b_i w_i} \Big)\times  \\ &\Big(\prod_{i=0}^r A_p(b_i, B, b_i)^{\gamma_i} \Big)\Big( \prod_{i=0}^{r}D_p(\frac{b_i}{2}, B)^{\tau_i} \Big)
\end{split}
\endeq
Using this and a simple rescaling argument, we can rewrite \eqref{0410e4.12} as follows
\beq\label{0107e5.17}
\begin{split}
A_p(u, B, u)& \lesim_{r,\epsilon} (\frac{1}{\delta})^{\epsilon+2u(\frac{1}{2}-\frac{5}{2p})}\Big( \prod_{i=0}^{r-1} (\frac{1}{\delta})^{ (\frac{1}{2}-\frac{1}{6}) u b_i w_i} \Big)\times  \\ &\Big(\prod_{i=0}^r A_p(b_i u, B, b_i u)^{\gamma_i} \Big)\Big( \prod_{i=0}^{r}D_p(\frac{b_i u}{2}, B)^{\tau_i} \Big).
\end{split}
\endeq
Here $B$ stands for a ball of radius $\delta^{-3}$, and $u$ is a sufficiently small positive constant such that $u\cdot (\frac{3}{2})^r\le 1$. \\

In the end, we iterate \eqref{0107e5.17}. To iterate, we will dominate each  $A_p(ub_i, B, u b_i)$ again by using \eqref{0107e5.17}. To enable such an iteration, we need to choose $u$ to be even smaller. Let $M$ be a large integer. Choose $u$ such that
\beq
[2(\frac{3}{2})^r]^M u\le 2.
\endeq
 This allows us to iterate \eqref{0107e5.17} $M$ times. To simplify the iteration, we bound all the powers of $\frac{1}{\delta}$ by 
 \beq
2u(\frac{1}{2}-\frac{5}{2p})+\left( \sum_{i=0}^{\infty} u (\frac{1}{2}-\frac{1}{6}) b_i w_i \right).
\endeq
By a direct calculation,
\beq\label{0107e5.20}
\sum_{j=0}^{\infty} b_j w_j=\frac{3 (-5 + p)}{2 (15 - 10 p + p^2)}.
\endeq
Moreover,
\beq\label{0107e5.22}
\sum_{j=0}^{\infty} b_j \tau_j=\frac{75 - 25 p + 2 p^2}{15 - 10 p + p^2}.
\endeq
If we define
\beq
\lambda_0:= 2(\frac{1}{2}-\frac{5}{2p})+(\frac{1}{2}-\frac{1}{6}) \frac{3 (-5 + p)}{2 (15 - 10 p + p^2)},
\endeq
then \eqref{0107e5.17} can be rewritten as follows
\beq\label{0405e2.45}
\begin{split}
A_p(u, B, u) \lesim_{r,\epsilon} \delta^{-\epsilon-u \lambda_0}  \Big(\prod_{i=0}^r A_p(ub_i, B, ub_i)^{\gamma_i} \Big)\Big( \prod_{i=0}^{r} D_p(\frac{u b_i}{2}, B)^{\tau_i} \Big),
\end{split}
\endeq
for every ball $B$ of radius $\delta^{-3}$. Now we have arrived precisely at the estimate (6.51) from \cite{BDG-2}. The calculation there, from page 27 to page 30, can be repeated line by line. In the end, we obtain that 
\beq
\log_{\frac{1}{\delta}}B_{9, 9}(\delta)\le \lim_{p\to 9} \frac{\lambda_0(p)}{\frac{1}{2}(\sum_{j=0}^{\infty} b_j \tau_j(p))}.
\endeq
By plugging in the calculation \eqref{0107e5.20}--\eqref{0107e5.22}, we will be able to conclude the desired decoupling inequality \eqref{desired-decoupling}.

\hspace{1cm}

\noindent Department of Mathematics, Indiana University, 831 East 3rd St., Bloomington IN 47405\\
\emph{Email address}: shaoguo@iu.edu

\end{document}